\documentclass[12pt]{article}
\usepackage{booktabs}
\usepackage{caption}
\usepackage{mathrsfs}
\usepackage{amsmath}
\usepackage{amsfonts,amsthm,amssymb,mathrsfs,bbding}
\usepackage{graphics,multicol}
\usepackage{graphicx}
\usepackage{color}
\usepackage{float}
\usepackage{subfigure}
\usepackage{enumerate}
\usepackage{caption}
\usepackage{rotating}
\usepackage{lscape}
\usepackage{longtable}
\allowdisplaybreaks[4]
\usepackage{tabularx}
\usepackage[colorlinks=true,anchorcolor=blue,filecolor=blue,linkcolor=blue,urlcolor=blue,citecolor=blue]{hyperref}
\usepackage{extarrows}
\usepackage{cite}
\usepackage{tikz}
\usepackage{latexsym,bm}
\usepackage{mathtools}
\evensidemargin=\oddsidemargin\topmargin=-1.5cm

\newtheorem{thm}{Theorem}

\newtheorem{lem}[thm]{Lemma}
\newtheorem{cor}[thm]{Corollary}

\newtheorem{claim}{Claim}
\theoremstyle{definition}

\addtocounter{section}{0}
\begin{document}

	\title{\bf The maximum number of edges in $C_{2k+1}^{-}$-free unbalanced signed graphs with bounded clique number \footnote{This project is supported by the National Natural Science Foundation of China (No. 12331012) and Hunan Provincial Innovation Foundation for Postgraduate (No. CX20250747).}}

	\author{{Junjie Wang\footnote{\emph{E-mail address:} wangjunjie2827@163.com}, Yaoping Hou\footnote{Corresponding author.}\setcounter{footnote}{-1}\footnote{\emph{E-mail address:} yphou@hunnu.edu.cn}}\\[2mm]
\small College of Mathematics and Statistics, Hunan Normal University,\\
		\small Changsha,  Hunan, 410081, China\\[2mm]
		}

	\date{}
	\maketitle
	{\flushleft\large\bf Abstract } Recently, the Tur\'{a}n problem for graphs with bounded clique number has attracted considerable attention. Since a graph can be regarded as a signed graph without negative edges, it is natural to extend the study of such Tur\'{a}n-type problems to signed graphs. With this motivation, we investigate the Tur\'{a}n-type problem for unbalanced $C_{2k+1}^{-}$-free signed graphs with bounded clique number. In fact, we establish a general result via the classical stability theorem. Specifically, for sufficiently large $n$ and a color-critical graph $F$, we determine the maximum number of edges among all $n$-vertex $C_{2k+1}^{-}$-free unbalanced signed graphs whose underlying graphs are $F$-free.
    
	\begin{flushleft}

\textbf{AMS classification}: 05C22; 05C35\\

		\textbf{Keywords:} Tur\'{a}n problem; signed graph; odd cycle; color-critical graph.
	\end{flushleft}

	\section{Introduction}

    The Tur\'{a}n problem is one of the central topics in extremal graph theory, which is concerned with the maximum number of edges in an $n$-vertex graph that does not contain $F$ as a subgraph. A graph $G$ is called $\mathcal{F}$-free if $G$ does not contain any member of the family $\mathcal{F}$ of graphs as a subgraph. The \emph{Tur\'{a}n number} $ex(n,\mathcal{F})$ is the maximum number of edges in an $n$-vertex $\mathcal{F}$-free graph. If $\mathcal{F}$ contains one graph $F$, then we write $ex(n,F)$ instead of $ex(n,\mathcal{F})$. 
    
    The well-known Tur\'{a}n theorem states that if a graph contains no complete subgraph $K_{r+1}$, then the maximum number of edges it can contain is attained by the \emph{Tur\'{a}n graph} $T_{r}(n)$, that is, a complete balanced $r$-partite graph on $n$ vertices. Recently, the Tur\'{a}n problem for graphs with bounded clique number has attracted considerable attention. Let \(\mathrm{ex}(n, r, H)\) be the maximum number of edges in an \(n\)-vertex \(\mathcal{F}\)-free graph whose clique number is at most \(r\). In 2024, Alon and Frankl \cite{AF24} determined the exact value of \(\mathrm{ex}(n,r,M_{s+1})\), where \(M_{s+1}\) denotes a matching consisting of \(s+1\) independent edges. Moreover, they investigated the case where the clique number condition is replaced by forbidding a color-critical graph \(F\), and determined the exact value of \(\mathrm{ex}(n,\{F,M_{s+1}\})\). Subsequently, Katona and Xiao \cite{KX24} determined the exact value of $\mathrm{ex}(n,r,P_{k})$ when $k>2r+1$. Furthermore, they conjectured that if the clique number condition is replaced by forbidding a graph $H$ with $\chi(H)\ge 3$, then $\mathrm{ex}(n,\{H,P_{k}\})=n\max\left\{\left\lfloor k/2 \right\rfloor-1, \mathrm{ex}(k-1,H) / (k-1)\right\}+O_{k}(1)$. This conjecture was later resolved by Liu and Kang \cite{LK24}. In 2025, Lu, Liu and Kang \cite {LLK25} determined the exact value of $\mathrm{ex}(n, r, tS_{\ell})$ for sufficiently large $n$, where $tS_{\ell}$ denotes the disjoint union of $t$ copies of the star $S_{\ell}$. The study of cycles plays a pivotal role in extremal graph theory. Let $\mathcal{C}_{\ge k}$ denote the set of cycles of length at least $k$. Dou, Ning and Peng investigated the value of $\mathrm{ex}(n, r, \mathcal{C}_{\ge k})$ and proved Katona and Xiao's conjecture in a stronger form. Many other interesting extremal problems are studied as well, see \cite{Ger25,LZL25,XK25}.

    Since a graph can be interpreted as a signed graph without negative edges, the properties of a graph can also be considered in terms of a signed graph. Therefore, it is natural to consider Tur\'{a}n-type problems for signed graphs. We first give some relevant concepts of signed graphs below. Let \( G \) be a graph with vertex set \( V = V(G) \) and edge set \( E = E(G) \). Then, a \emph{signed graph} is defined as \(\dot{G}=(G, \sigma)\), where \(G\) is the underlying graph and \(\sigma:E\rightarrow\{+1,-1\}\) is the sign function. If all edges get signs $+1$(resp. $-1$), then $\dot{G}$ is called \emph{all positive}(resp. \emph{all negative}), denoted by $(G,+)$ (resp. $(G,-)$). The sign of a cycle \(\dot{C}\)(resp. path $\dot{P}$) is defined by \(\sigma(\dot{C}) = \prod_{e \in E(C)} \sigma(e)\)(resp. \(\sigma(\dot{P}) = \prod_{e \in E(P)} \sigma(e)\)). If \(\sigma(\dot{C}) = +1\) (resp. \(\sigma(\dot{C}) = -1\)), then \(\dot{C}\) is called \emph{positive} (resp. \emph{negative}). A positive (resp. negative) cycle of length \(k\) is denoted by \(C_k^+\) (resp. \(C_k^-\)). An important feature of signed graphs is the concept of switching the signature. For a subset $U$ of $V(\dot{G})$, let $\dot{G}_U$ denote the signed graph obtained from $\dot{G}$ by reversing the sign of each edge between $U$ and $V(\dot{G})\setminus U$. Then we say that $\dot{G}$ and $\dot{G}_U$ are \emph{switching equivalent}. It is not difficult to see that each cycle in $\dot{G}$ maintains its sign after a switching. Hence, $\dot{G}'$ and $\dot{G}_{U}$ have the same set of negative cycles. A signed graph is called \emph{balanced} if it is switching equivalent to an all positive signed graph; otherwise, it is called \emph{unbalanced}. Thus, a signed graph is balanced if and only if it contains no negative cycle. Two signed graphs $\dot{G} = (G, \sigma)$ and $\dot{G}' = (G', \sigma')$ are \emph{switching isomorphic} if $\dot{G}'$ is isomorphic to $\dot{G}_{U}$ for a subset $U$ of $V(G)$. A signed graph $\dot{G}$ is $\dot{H}$-free if it has no subgraph switching isomorphic to $\dot{H}$.

    The Tur\'{a}n-type problem of signed graphs has been studied in recent years. For example, Wang, Hou, and Li \cite{WHL24} determined the maximum number of edges in $C_{3}^{-}$-free unbalanced signed graphs. This result was later extended by Wang et al. \cite{WHH25} to the case of $C_{2k+1}^{-}$-free unbalanced signed graphs. A signed graph version of Tur\'{a}n theorem was studied by Xiong and Hou \cite{XH24}. They determined the maximum number of edges in $\mathcal{K}_{r+1}^-$-free unbalanced signed graphs, where $\mathcal{K}_{r+1}^-$ is the set of all unbalanced signed complete graphs on $r + 1$ vertices. For other interesting extremal problems on signed graphs, see \cite{BCKW18, Brun22, CY24, WL25}.

    In this paper, motivated by the study of classical extremal graph theory, we focus on analogues of the Tur\'{a}n-type problem for signed graphs with bounded clique number. In fact, we establish a general result. Let \(C_{3}^{-} \cdot T_{r}(n-2)\) denote the signed graph obtained by identifying a vertex of \(C_{3}^{-}\) with a vertex of \((T_{r}(n-2), +)\). Let $\mathcal{H}$ be the family of graphs obtained as follows. Start with a complete tripartite graph $K$ with partite sets $S_1,S_2,S_3$, where
$$
(|S_1|,|S_2|,|S_3|)
\in
\begin{cases}
\{(k+1,k-2,k-2),(k,k-1,k-2)\}, & n=3k,\\[4pt]
\{(k,k-1,k-1),(k+1,k-1,k-2)\}, & n=3k+1,\\[4pt]
\{(k+1,k-1,k-1)\}, & n=3k+2.
\end{cases}
$$
Then add a triangle $xyz$ disjoint from $K$, and join two vertices of this triangle, say $x$ and $y$, to every vertex of the largest partite set $S_1$. It is easy to verify that the number of edges is the same for every graph in $\mathcal{H}$. Let $\dot{\mathcal{H}}$ denote the corresponding family of signed graphs obtained from the graphs in $\mathcal{H}$ by assigning a negative sign to every edge incident with $x$, except for $xz$, and a positive sign to every remaining edge. A graph is \emph{color-critical} if it contains an edge whose deletion reduces its chromatic number. Our main results are as follows.

\begin{thm}\label{thm::main}
Let $r$ and $k$ be integers with $r\ge 3$ and $k\ge 2$. Let $F$ be a color-critical graph with $\chi(F)=r+1$, and let $G$ be an $n$-vertex $F$-free graph. For sufficiently large $n$, if $\dot{G}=(G,\sigma)$ is an unbalanced $C_{2k+1}^{-}$-free signed graph with the maximum number of edges, then $\dot{G}$ is switching isomorphic to a member of $\dot{\mathcal{H}}$ when $r=3$ and $k=2$, and is switching isomorphic to $C_3^{-}\cdot T_r(n-2)$ otherwise.
\end{thm}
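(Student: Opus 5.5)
Our plan is a stability argument followed by exact structural cleanup.

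\emph{Lower bound and edge counts.} First compute $e(C_3^{-}\cdot T_r(n-2)) = t_r(n-2)+3$. When $r=3$, $k=2$, compare this with the edge count of the members of $\dot{\mathcal{H}}$, namely $e(K)+3+2|S_1|$. Here $K$ is a tripartite graph on $n-3$ vertices, slightly unbalanced, with largest part roughly $n/3+1$. Both constructions are unbalanced. Their underlying graphs are $F$-free: every $(r+1)$-chromatic subgraph would have to use the small gadget, and a color-critical $F$ cannot do so (for $C_3^{-}\cdot T_r(n-2)$ the underlying graph is $r$-colorable; for $\mathcal{H}$ this should be checked directly using $r=3$). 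Both are also $C_{2k+1}^{-}$-free. In $C_3^{-}\cdot T_r(n-2)$ every negative cycle passes through the cut vertex and lies in the triangle, so it has length $3<2k+1$. In $\dot{\mathcal{H}}$, a negative cycle must use an odd number of negative edges at $x$, so it uses $xz$ together with some $xs$, $s\in S_1$. It therefore has the form $x\,z\,y\,\ldots\,s\,x$, where the segment from $y$ to $s$ runs through $K$. For $k=2$ this yields lengths $3$, or $4$ via $x z y s x$, or at least $6$. Length $5$ is impossible because the $y$–$s$ path would need two edges $y s' s$ with $s,s'\in S_1$, and $S_1$ is independent. The case $k=2$ is thus exactly where this gadget exploits parity, which explains the exceptional family. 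For $k\ge3$ the same gadget admits a $y$–$s$ path of length $2k-2$ through $K$, so it fails there.

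\emph{Stability.} Let $\dot G$ be extremal. Then $e(G)\ge t_r(n)-O(n)$, so by the Erd\H{o}s–Simonovits stability theorem $G$ differs from $T_r(n)$ in $o(n^2)$ edges. Take a max-$r$-cut $V_1,\dots,V_r$ and apply the usual cleaning: the set $W$ of vertices with $\Omega(n)$ neighbours inside their own part, or of low degree, has $|W|=o(n)$. Since $F$ is color-critical and $n$ is large, $\mathrm{ex}(n,F)=t_r(n)$ (Simonovits), and a standard argument shows that no vertex of $V\setminus W$ has a neighbour in its own part outside $W$. Next, use $C_{2k+1}^{-}$-freeness. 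Among the well-behaved vertices, every edge lies in very many $(2k+1)$-cycles inside the nearly complete $r$-partite structure, since $r\ge3$ gives triangles and then longer odd cycles. Together with the many even cycles, this should force the signing on $G[V\setminus W]$ to be balanced. After switching we may assume it is all positive. Unbalancedness then forces a negative cycle through $W$.

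\emph{Exact structure (main obstacle).} Each vertex $w$ lying on a negative cycle can have only bounded ``positive-consistent'' adjacency into the large parts. Otherwise we could route a $(2k+1)$-path through the dense part with the appropriate parity and close a negative $C_{2k+1}$. So each such vertex loses about $n/r$ edges compared with $T_r(n)$. Show that the optimum has exactly the loss of one vertex: a pendant $C_3^{-}$ (or the $\mathcal{H}$ gadget). Concretely, we aim to prove that the vertices involved in negative cycles span a bounded structure and that the total deficit is at least $t_r(n)-t_r(n-2)-3$, with equality only for the claimed graphs. The hardest part is this case analysis, especially $r=3$, $k=2$. There the negatively signed vertex $x$ can attach to a whole part $S_1$ without creating a $C_5^-$, and we must optimize the part sizes, which yields the listed triples. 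We expect this to be done by bounding the degrees of the one or two exceptional vertices into each $V_i$, and then comparing edge counts exactly using the formulas from the first step.
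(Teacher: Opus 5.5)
\paragraph{Overall.} Your first two steps follow the paper's route: stability, max-cut cleaning, $e(G[S_i])=0$, then balance of the dense part. The balance step, however, is only asserted. The paper needs a genuine lemma there (Lemma~\ref{S_1}: a $C_{2k+1}^{-}$-free signed complete $m$-partite graph with $m\ge 3$ and all parts of size at least $2k$ is balanced), followed by an extension from a complete multipartite core to all of $S$. (Minor point: $\mathcal H$ is $F$-free simply because it is $3$-colourable. Colour $S_1,S_2,S_3$ by $1,2,3$ and $x,y,z$ by $2,3,1$.)

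\paragraph{The gap: your third step.} This step is a hope rather than an argument, and the heuristic guiding it is false. A vertex on a negative cycle need not have few neighbours in the dense part. The cut vertex of $C_3^{-}\cdot T_r(n-2)$ lies on the negative triangle and has degree about $(1-\frac1r)n$. In $\dot{\mathcal H}$, the vertex $y$ lies on negative cycles and has $|S_1|\approx n/3$ positive neighbours in $K$. What $C_{2k+1}^{-}$-freeness actually gives is that each exceptional vertex sends edges of a single sign into the balanced part; otherwise a path of length $2k-1$ inside that part closes a $C_{2k+1}^{-}$. Your accounting is also inconsistent. The deficit you must establish is $t_r(n)-t_r(n-2)-3\approx 2(1-\frac1r)n$, which is two vertices' worth, not one. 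A per-vertex loss of order $n/r$ cannot produce it.

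\paragraph{The missing idea: symmetrization (Lemma~\ref{S_4}(i)).} Suppose a vertex $v$ with $d(v)\le(1-\frac1r-7\varepsilon)n$ avoids some negative cycle. Delete its edges and join it positively to $\bigcup_{i\ge2}N_{S_i}(u)$ for some $u\in S_1$.
\begin{itemize}
\item The new graph is still $F$-free and $C_{2k+1}^{-}$-free, since any copy through $v$ can be rerouted through a common neighbour in $S_1$.
\item It is still unbalanced, since the avoided negative cycle survives.
\item It has more edges.
\end{itemize}
Hence every low-degree vertex lies on \emph{every} negative cycle.

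\paragraph{How this yields a finite case analysis.} Take $\dot G[S']$ all positive. Then the low-degree set $L'$ is the vertex set of a path $v_1\cdots v_p$ on a shortest negative cycle.
\begin{itemize}
\item The interior vertices $v_2,\dots,v_{p-1}$ have no neighbours in $S'$. Such a neighbour would split the cycle into two cycles, one of which is negative and misses a vertex of $L'$.
\item After normalizing the path to be positive, $v_1$ and $v_p$ send edges of opposite signs into $S'$.
\end{itemize}
Only at this point does the edge count become a finite case analysis in $p$ and $|V(C^-)\cap S'|$.
\begin{itemize}
\item For $p=2$ one gets exactly $t_r(n-2)+3$.
\item The family $\dot{\mathcal H}$ arises exactly when $p=2k-1=3$ and the cycle meets $S'$ in one vertex. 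Then $N_{S'}(v_1)$ and $N_{S'}(v_3)$ span no edges, and one maximizes $2|S_1'|-\frac12\sum_i|S_i'|^2$. This beats $t_r(n-2)+3$ only when $r=3$.
\end{itemize}
Without the symmetrization step you have no control over low-degree vertices lying off the negative cycles. So the ``bounded structure'' you aim for cannot be reached from your setup.
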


In Theorem \ref{thm::main}, we assume that the chromatic number of the color-critical graph $F$ satisfies $\chi(F) \ge 4$. In fact, the corresponding conclusion holds trivially when $\chi(F)=3$. It should be noted that the unique extremal graph of three color-critical graphs is the graph $T_2(n)$, while odd cycles are also three color-critical graphs. Therefore, up to switching isomorphism, the unbalanced bipartite Tur\'{a}n graph is the unique extremal signed graph. If we replace $C_{2k+1}^{-}$ with $C_{2k}^{-}$ in Theorem \ref{thm::main}, obviously, $(T_{r}(n), -)$ satisfies the condition and has the maximum number of edges. 

Note that the complete graph is a color-critical graph. The following result is an immediate consequence of Theorem \ref{thm::main}. 

\begin{thm}\label{cor::1}
    Let $r$ and $k$ be integers with $r \ge 3$ and $k \ge 2$. Let $\dot{G} = (G, \sigma)$ is an unbalanced $C_{2k+1}^{-}$-free signed graph on $n$ vertices with the clique number at most $r$. For sufficiently large $n$, if $\dot{G}$ is extremal, then $\dot{G}$ is switching isomorphic to a member of $\dot{\mathcal{H}}$ when $r=3$ and $k=2$, and is switching isomorphic to $C_3^{-}\cdot T_r(n-2)$ otherwise.
\end{thm}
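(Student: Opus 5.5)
Theorem~\ref{cor::1} is the special case $F=K_{r+1}$ of Theorem~\ref{thm::main}, so the plan is simply to verify the hypotheses of Theorem~\ref{thm::main} for this choice of $F$.

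First, the clique number of $G$ is at most $r$ exactly when $G$ contains no $K_{r+1}$ as a subgraph, that is, when $G$ is $K_{r+1}$-free. Second, $\chi(K_{r+1})=r+1$. Third, $K_{r+1}$ is color-critical: deleting any edge $uv$ leaves a graph in which $u$ and $v$ may share a color, so $r$ colors suffice and $\chi(K_{r+1}-uv)=r$.

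Hence, with $F=K_{r+1}$, $r\ge 3$ and $k\ge 2$, the class of $n$-vertex unbalanced $C_{2k+1}^{-}$-free signed graphs whose underlying graph has clique number at most $r$ coincides with the class considered in Theorem~\ref{thm::main}. An extremal graph for one problem is therefore extremal for the other. For sufficiently large $n$, Theorem~\ref{thm::main} then says that such a $\dot{G}$ is switching isomorphic to a member of $\dot{\mathcal{H}}$ when $(r,k)=(3,2)$, and to $C_3^{-}\cdot T_r(n-2)$ otherwise.

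There is no real obstacle: the only points to state explicitly are the equivalence between clique number at most $r$ and being $K_{r+1}$-free, and the color-criticality of $K_{r+1}$. All the substance lies in Theorem~\ref{thm::main}.
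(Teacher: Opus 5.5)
Your proposal is correct and matches the paper, which also obtains this result immediately from Theorem~\ref{thm::main} by taking $F=K_{r+1}$ and noting that complete graphs are color-critical. Your explicit checks (clique number at most $r$ is equivalent to being $K_{r+1}$-free, $\chi(K_{r+1})=r+1$, and deleting an edge lowers the chromatic number) are exactly the hypotheses the paper relies on.
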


Let $\mathcal{\dot{C}}_{\ge k}^{-}$ be the set of negative cycles of length at least $k$. By Theorem  \ref{cor::1}, we can derive an analogue of the result obtained by Dou et al.\cite{DNP25} for signed graphs.

\begin{cor}\label{cor::2}
    Let $r$ and $k$ be integers with $r \ge 3$ and $\ell \ge 4$. Let $\dot{G} = (G, \sigma)$ is a $\mathcal{\dot{C}}_{\ge \ell}^{-}$-free unbalanced signed graph on $n$ vertices with the clique number at most $r$. For sufficiently large $n$, if $\dot{G}$ is extremal, then $\dot{G}$ is switching isomorphic to $C_{3}^{-} \cdot T_{r}(n-2)$.
\end{cor}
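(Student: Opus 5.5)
The plan is to sandwich the class of $\mathcal{\dot{C}}_{\ge \ell}^{-}$-free graphs inside a class already handled by Theorem~\ref{cor::1}, and to check that the extremal graph of that larger class lies in the smaller one.

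\textbf{Step 1: choice of $k$.} Fix $k$ with $2k+1\ge \ell$, $k\ge 2$ and $(r,k)\neq(3,2)$; for instance $k=\max\{3,\ell\}$ works. Every negative cycle of length $2k+1$ has length at least $\ell$. Hence every $\mathcal{\dot{C}}_{\ge \ell}^{-}$-free signed graph is also $C_{2k+1}^{-}$-free. So the family $\mathcal{A}$ of $n$-vertex unbalanced $\mathcal{\dot{C}}_{\ge \ell}^{-}$-free signed graphs with clique number at most $r$ is contained in the family $\mathcal{B}$ of $n$-vertex unbalanced $C_{2k+1}^{-}$-free signed graphs with clique number at most $r$.

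\textbf{Step 2: $C_3^{-}\cdot T_r(n-2)$ belongs to $\mathcal{A}$.} Since $T_r(n-2)$ and the triangle share only one vertex, that vertex is a cut vertex. Thus every cycle lies entirely in the triangle or entirely in $(T_r(n-2),+)$.
\begin{itemize}
\item The only negative cycle is the triangle, of length $3<4\le \ell$. So the graph is $\mathcal{\dot{C}}_{\ge \ell}^{-}$-free.
\item It is unbalanced, because it contains the negative triangle.
\item Every clique lies inside one block, so the clique number is $\max\{3,r\}=r$, using $r\ge 3$.
\end{itemize}

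\textbf{Step 3: extremality.} Take $n$ large enough for Theorem~\ref{cor::1} with this $k$; since $k$ depends only on $\ell$, this is legitimate. Theorem~\ref{cor::1}, with $(r,k)\neq(3,2)$, says every edge-maximal member of $\mathcal{B}$ is switching isomorphic to $C_3^{-}\cdot T_r(n-2)$. Combining with Steps 1 and 2,
$$\max_{\mathcal{A}} e \;\le\; \max_{\mathcal{B}} e \;=\; e\bigl(C_3^{-}\cdot T_r(n-2)\bigr) \;\le\; \max_{\mathcal{A}} e,$$
so all three quantities are equal. Therefore any extremal $\dot{G}\in\mathcal{A}$ has as many edges as an extremal member of $\mathcal{B}$, so $\dot{G}$ is itself extremal in $\mathcal{B}$. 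By the uniqueness part of Theorem~\ref{cor::1}, $\dot{G}$ is switching isomorphic to $C_3^{-}\cdot T_r(n-2)$.

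\textbf{Main obstacle.} The argument has no real difficulty. The only points needing care are the choice of $k$ that avoids the exceptional case $(r,k)=(3,2)$, and the cut-vertex argument in Step 2 showing that no long negative cycle exists.
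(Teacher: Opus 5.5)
Your proposal is correct and follows the paper's proof. Like the paper, you choose $k\ge 3$ with $2k+1\ge \ell$, which avoids the exceptional case $(r,k)=(3,2)$; you note that $\mathcal{\dot{C}}_{\ge \ell}^{-}$-freeness implies $C_{2k+1}^{-}$-freeness and that $C_3^{-}\cdot T_r(n-2)$ lies in the smaller class; and you then invoke Theorem~\ref{cor::1}. Your cut-vertex check and the squeeze of the two maxima only spell out details the paper leaves implicit.
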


	\section{Notation and preliminaries}

     First, we describe notations and terminologies. Let $G$ be a graph with vertex set $V(G)$ and edge set $E(G)$, and let $e(G)=|E(G)|$ denote the size of $G$. For a vertex $v \in V(G)$, the neighborhood $N(v)$ of $v$ is $\{u| uv \in E(G)\}$, and the degree $d(v)$ of $v$ is $|N(v)|$. For $S \subseteq V(G)$ and $v \in V(G)$, let $d_S(v) = |N_S(v)| = |N(v)\cap S|$. We denote by $E_{G}(S,T)$ the set of edges between $S$ and $T$ for any two disjoint subsets $S$ and $T$ of $V(G)$, which is abbreviated as $E(S,T)$. Let $e(S,T)=|E(S,T)|$. For any subset of vertices $S$, let the induced subgraph of $S$ be $G[S]$. For disjoint subsets \(A,B \subseteq V(G)\), we denote \(G[A,B]\) as the induced bipartite subgraph of \(G\) with parts $A$ and $B$. Let $P = x_1x_2\cdots x_m$ be a path in $G$. For $x_i,x_j \in V(P)$, we use $x_iPx_j$ to denote the subpath of $P$ between $x_i$ and $x_j$.

      The following result was obtained by Simonovits \cite{Sim68}. 
      
      \begin{lem}(Simonovits\cite{Sim68})\label{Sthm}
      For any color-critical graph $H$ with $\chi(H) = r + 1 \ge 3$ and for sufficiently large $n$, the Tur\'{a}n graph $T_r(n)$ is the unique graph which attains the maximum number of edges in an $n$-vertex $H$-free graph. 
      \end{lem}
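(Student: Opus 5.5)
Although Lemma~\ref{Sthm} is classical and will only be cited, here is how I would prove it: a stability argument followed by local cleaning. Fix $H$ with $\chi(H)=r+1$ and a critical edge $ab$, so that $H-ab$ is $r$-colourable. Take a colouring in which $a$ and $b$ share a colour class; this is possible because otherwise $H$ itself would be $r$-colourable. Hence $H$ embeds into the graph $K_r(t,\dots,t)^{+}$, obtained from the complete $r$-partite graph with parts of size $t=|V(H)|$ by adding one edge inside a part.

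Let $G$ be an $n$-vertex $H$-free graph with $e(G)\ge t_r(n)$. The steps are as follows.
\begin{enumerate}
\item \emph{Minimum degree.} I first reduce to $\delta(G)\ge (1-\frac1r-\varepsilon)n$. Repeatedly delete a vertex of degree below $(1-\frac1r-\varepsilon)n$. Each deletion increases $e(G)-t_r(|V(G)|)$ by roughly $\varepsilon n'$, where $n'$ is the current order. If more than $\sqrt{\varepsilon}\,n$ vertices were deleted, the remaining graph would exceed the Erd\H{o}s--Stone bound $t_r(n')+o(n'^2)$ and so contain $H$. So only $o(n)$ vertices are removed, and the remaining graph $G'$ has the stated minimum degree. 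At the end I show the deleted set is empty by a degree comparison with $T_r(n)$.
\item \emph{Stability.} Apply the Erd\H{o}s--Simonovits stability theorem (a consequence of Erd\H{o}s--Stone together with removal or regularity) to $G'$. This shows $G'$ is $o(n^2)$-close to $T_r(n')$. Fix a partition $V_1,\dots,V_r$ of $V(G')$ maximising the number of crossing edges. Then $|V_i|=(\frac1r+o(1))n$, and $\sum_i e(G'[V_i])=o(n^2)$.
\item \emph{Structure of vertices.} By maximality of the cut, each vertex has at least as many neighbours in every other part as in its own part. Combined with the minimum degree, every vertex $v\in V_i$ has at most $\gamma n$ neighbours inside $V_i$, where $\gamma\to 0$ as $\varepsilon\to0$. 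Consequently $v$ has at least $(\frac1r-2\gamma)n$ neighbours in each $V_j$ with $j\ne i$.
\end{enumerate}

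The key step, and the one I expect to be the main obstacle, is showing that no part contains an edge. Suppose $uv\in E(G'[V_1])$. The common neighbourhood of $u$ and $v$ meets each $V_j$, $j\ge2$, in at least $(\frac1r-5\gamma)n$ vertices. I then need $t$ vertices in each of these common neighbourhoods, together with $t-2$ further vertices of $V_1$, all mutually adjacent across parts. This would give $K_r(t,\dots,t)^{+}\supseteq H$ with $u,v$ as the critical edge.

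To produce them, I would use that the bipartite graphs $G'[V_i,V_j]$ miss only $o(n^2)$ edges. A standard greedy or K\H{o}v\'ari--S\'os--Tur\'an style counting then finds a complete $r$-partite $K_r(t,\dots,t)$ inside any linear-sized subsets of the parts. I apply this to $V_1\setminus\{u,v\}$ and to the common neighbourhoods in $V_2,\dots,V_r$. The delicate point is choosing constants so that ``linear-sized'' survives the losses from the minimum degree, the cut maximality and the $o(n^2)$ defect simultaneously.

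Once each $V_i$ is independent, $G'$ is $r$-partite, so $e(G')\le t_r(n')$. Equality holds only if $G'=T_r(n')$. Feeding this back through step~1 forces no vertices to have been deleted, since each deletion strictly increased the surplus over the Tur\'an number. Hence $G=T_r(n)$, which proves uniqueness for sufficiently large $n$.
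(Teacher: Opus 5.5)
\textbf{Gap in Step~3.} The paper does not prove Lemma~\ref{Sthm}; it only cites \cite{Sim68}. Your overall route is the standard stability proof: minimum-degree reduction, Erd\H{o}s--Simonovits stability, a max-cut partition, then embedding $K_r(t,\dots,t)^{+}$ around an edge inside a part. The paper's Lemmas~\ref{G_1}--\ref{G_4} run the same scheme in the signed setting.

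The problem is the claim that every $v\in V_i$ has at most $\gamma n$ neighbours in $V_i$. This does not follow from cut-maximality plus the minimum degree. Maximality gives only $d_{V_i}(v)\le d_{V_j}(v)$ for all $j$, i.e.\ $d_{V_i}(v)\le d(v)/r$, which is of order $n/r$, not $\gamma n$. Combined with the minimum degree, this yields only $d_{V_j}(v)\ge(\frac{1}{r^2}-O(\varepsilon))n$ for $j\ne i$ (compare \eqref{eq::3}). That is too weak for your key step. Two sets of size about $n/r^2$ inside a part of size about $n/r$ can be disjoint, so $N(u)\cap N(v)\cap V_j$ need not be linear, let alone of size $(\frac1r-5\gamma)n$. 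Nothing among the facts you invoke in Step~3 excludes a vertex adjacent to all other vertices. Such a vertex meets the degree condition, the local max-cut condition and every $o(n^2)$ counting statement. So $H$-freeness must already be used at this point, not only in the key step.

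\textbf{How to fix it.} The missing ingredient is a second embedding argument; this is exactly the role of Lemma~\ref{G_3} ($W\subseteq L$) in the paper. For example:
\begin{enumerate}
\item Suppose $v\in V_i$ had at least $\gamma n$ neighbours in $V_i$. By cut-maximality it then has at least $\gamma n$ neighbours in every $V_j$.
\item Since $e(G')\ge t_r(n')$ and $\sum_i e(V_i)=o(n^2)$, $G'$ misses only $o(n^2)$ crossing edges. Hence the sets $N(v)\cap V_1,\dots,N(v)\cap V_r$ contain a $K_r(t,\dots,t)$.
\item Moving $v$ into the $i$-th part of this $K_r(t,\dots,t)$ gives a copy of $K_r(t,\dots,t)^{+}\supseteq H$, a contradiction.
\end{enumerate}
The paper argues slightly differently. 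It takes a neighbour $w\in V_i$ of $v$ that is itself typical, i.e.\ has few neighbours in $V_i$; this is possible because atypical vertices are few by $\sum_i e(V_i)=o(n^2)$. It then embeds around the edge $vw$, using that $v$ and $w$ have $(\frac{1}{r^2}-O(\varepsilon))n$ common neighbours in each other part.

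Either way, the stability error must be chosen small compared with $\gamma^2$. With this step inserted, the rest of your argument goes through: the key step, the bound $e(G')\le t_r(n')$, and the surplus argument showing that no vertex was deleted.
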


     The following classical stability theorem was proved by Erd\H{o}s and Simonovits \cite{Erd66,Erd68,Sim68}.
     
     \begin{lem}(Erd\H{o}s, Simonovits\cite{Erd66,Erd68,Sim68})\label{ESthm}
         Let $H$ be a graph with $\chi(H) = r + 1 \geq 3$. Then, for every $\varepsilon > 0$, there exist $\beta = \beta(H, \varepsilon) > 0$ and $n_0 = n_0(H, \varepsilon) \in \mathbb{N}$ such that the following holds. If $G$ is an $H$-free graph on $n \geq n_0$ vertices with $e(G) \geq e(T_r(n)) - \beta n^2$, then there exists a partition of $V(G) = V_1 \cup \cdots \cup V_r$ such that $\sum_{i=1}^r e(V_i) < \varepsilon n^2$. Therefore, $G$ can be obtained from $T_r(n)$ by adding and deleting a set of at most $\varepsilon n^2$ edges.
     \end{lem}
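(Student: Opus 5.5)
The plan is to reduce the general case to the case $H=K_{r+1}$ using the removal lemma, and then prove the clique case by a direct counting argument due to F\"uredi. Throughout, $t_r(n)=e(T_r(n))$ denotes the number of edges of the Tur\'an graph.

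\textbf{Step 1: few copies of $K_{r+1}$.} Since $\chi(H)=r+1$, the graph $H$ is a subgraph of the balanced blow-up $K_{r+1}(t)$ with $t=|V(H)|$. I will use the Erd\H{o}s supersaturation and blow-up principle (a hypergraph K\H{o}v\'ari--S\'os--Tur\'an argument). It says that for every $\delta>0$ and large $n$, a graph with at least $\delta n^{r+1}$ copies of $K_{r+1}$ contains $K_{r+1}(t)$. Because $G$ is $H$-free, $G$ therefore has $o(n^{r+1})$ copies of $K_{r+1}$.

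\textbf{Step 2: pass to a $K_{r+1}$-free graph.} By the graph removal lemma, which comes from Szemer\'edi's regularity lemma, we can delete at most $\gamma n^2$ edges of $G$ to obtain a $K_{r+1}$-free graph $G'$. Here $\gamma\to 0$ as $\delta\to 0$. Then
$$e(G')\ge t_r(n)-(\beta+\gamma)n^2.$$
It now suffices to show that such a $G'$ admits a partition whose parts contain fewer than $\varepsilon n^2/2$ edges in total. The at most $\gamma n^2$ deleted edges then add at most $\gamma n^2<\varepsilon n^2/2$, once $\beta$ and $\delta$ are chosen small in terms of $\varepsilon$.

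\textbf{Step 3: F\"uredi's bound for $K_{r+1}$-free graphs.} I will prove the following by induction on $r$. If $G'$ is $K_{r+1}$-free with $e(G')=t_r(n)-m$, then there is an $r$-partition $V_1\cup\cdots\cup V_r$ with $\sum_i e(V_i)\le m$.

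1. Choose a vertex $v$ of maximum degree $\Delta$, and put $V_1=V\setminus N(v)$.
2. Since $G'[N(v)]$ is $K_r$-free, induction applied to $N(v)$ partitions it into $V_2,\dots,V_r$, with the number of edges inside these parts bounded by the deficit of $G'[N(v)]$ from $t_{r-1}(\Delta)$.
3. Every edge of $G'$ meets $V_1$ or lies inside $N(v)$. Each vertex of $V_1$ has degree at most $\Delta$, so
$$e(G')\le (n-\Delta)\Delta-e(V_1)+e(G'[N(v)]).$$
4. Combine this inequality with $t_{r-1}(\Delta)+(n-\Delta)\Delta\le t_r(n)$.

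Together these give $e(V_1)+\sum_{i\ge 2}e(V_i)\le m$. The case $r=1$ is trivial.

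\textbf{Main obstacle.} The delicate point is Step 1, the supersaturation/blow-up statement; the induction in Step 3 is a clean calculation. I would prove Step 1 by double counting. First, $\delta n^{r+1}$ copies of $K_{r+1}$ give $\Omega(n^{s})$ copies of $K_{r+1}$ inside many $s$-sets. Then apply Erd\H{o}s's theorem on complete $(r+1)$-partite subhypergraphs, in the $(r+1)$-uniform hypergraph of copies of $K_{r+1}$, to extract $K_{r+1}(t)$. If this is taken as known, the rest follows by choosing $\delta$, then $\gamma$ and $\beta$, and finally $n_0$.
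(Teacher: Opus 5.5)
The paper gives no proof of this lemma; it is quoted from Erd\H{o}s and Simonovits. Their original arguments avoid the regularity lemma: they start from the Erd\H{o}s--Stone theorem and analyse directly how the vertices of a near-extremal graph attach to a large blown-up $K_r$. Your proof takes a different route, essentially F\"uredi's 2015 proof, and it is correct.

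\textbf{Step 1.} This is just Erd\H{o}s's theorem that an $(r+1)$-uniform hypergraph with $\delta n^{r+1}$ edges contains $K^{(r+1)}(t,\ldots,t)$ for large $n$. Apply it to the hypergraph of $(r+1)$-cliques of $G$. Each pair from distinct parts lies in some clique, so $K_{r+1}(t)\supseteq H$ appears. The averaging over $s$-sets you mention is not needed.

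\textbf{Step 2.} This is the removal lemma, correctly applied.

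\textbf{Step 3.} The induction is correct. The inequality in item 3 is $e(G')=e(G'[N(v)])+\sum_{u\notin N(v)}d(u)-e(V_1)$ together with $d(u)\le\Delta$. The bound $t_{r-1}(\Delta)+(n-\Delta)\Delta\le t_r(n)$ holds because joining an independent $(n-\Delta)$-set to $T_{r-1}(\Delta)$ gives an $r$-partite graph.

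Your route is modular and has a sharp clique case: $\sum_i e(V_i)\le t_r(n)-e(G')$ exactly, for every $n$. As a result, $\beta$ can be taken linear in $\varepsilon$. The price is that the removal lemma makes $n_0$ of tower type, which the classical proofs avoid.

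One omission: the last sentence of the statement does not follow from $\sum_i e(V_i)<\varepsilon n^2$ with the same $\varepsilon$. You also need nearly balanced parts and few missing cross pairs. Both come from the edge count.
\begin{itemize}
\item Write $|V_i|=n/r+a_i$. Then $\sum_{i<j}|V_i||V_j|=\frac{r-1}{2r}n^2-\frac12\sum_i a_i^2$.
\item Combining $e(G)\ge t_r(n)-\beta n^2$ with $\sum_i e(V_i)<\varepsilon' n^2$ gives $\sum_i a_i^2\le 2(\varepsilon'+\beta)n^2+r/4$.
\item The same two facts give fewer than $(\varepsilon'+\beta)n^2$ missing cross pairs.
\item Rebalancing to the Tur\'an part sizes moves $O(\sqrt{r(\varepsilon'+\beta)})\,n+O(r)$ vertices, each costing at most $n$ edge changes.
\end{itemize}
So running your argument with $\varepsilon'$ and $\beta$ of order $\varepsilon^2/r$ yields the edit-distance form with $\varepsilon$.
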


     \begin{lem}(Cioab\v{a}, Feng, Tait, Zhang\cite{CFT20})\label{lem::jh}
         Let \( A_1, \cdots, A_p \) be finite sets. Then
         $$
         \left|\bigcap_{i=1}^{p} A_i\right| \geq \sum_{i=1}^p |A_i| - (p - 1)\left| \bigcup_{i=1}^p A_i \right|.
         $$
     \end{lem}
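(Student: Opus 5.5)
The plan is to pass to complements inside the union and then apply the union bound. Write $U=\bigcup_{i=1}^p A_i$, which is finite. For each $i$ set $B_i=U\setminus A_i$, so that $|B_i|=|U|-|A_i|$ because $A_i\subseteq U$.

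By De Morgan's law taken inside $U$, an element of $U$ fails to lie in $\bigcap_{i=1}^p A_i$ exactly when it misses some $A_i$. Hence
$$
U\setminus \bigcap_{i=1}^{p} A_i=\bigcup_{i=1}^{p} B_i .
$$
Since $\bigcap_{i} A_i\subseteq U$, the left-hand side has cardinality $|U|-\left|\bigcap_{i} A_i\right|$.

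Next, subadditivity of cardinality (the union bound) gives
$$
\left|\bigcup_{i=1}^{p} B_i\right|\le \sum_{i=1}^p |B_i| = p|U|-\sum_{i=1}^p|A_i|.
$$
Combining this with the previous step yields $|U|-\left|\bigcap_i A_i\right|\le p|U|-\sum_i|A_i|$. Rearranging gives exactly
$$
\left|\bigcap_{i=1}^{p} A_i\right|\ge \sum_{i=1}^p|A_i|-(p-1)|U|,
$$
which is the claimed inequality.

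There is no genuine obstacle here; the only care needed is to take complements relative to $U$ rather than an ambient universe, so that all sets are finite and the identity $|B_i|=|U|-|A_i|$ holds. As an alternative, one could argue by induction on $p$, using the two-set identity $|A\cap B|=|A|+|B|-|A\cup B|$ together with $|A\cup B|\le|U|$. The complement argument is shorter, so I would present that one.
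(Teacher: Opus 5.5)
Your proof is correct. With $B_i=U\setminus A_i$ you have $U\setminus\bigcap_{i}A_i=\bigcup_i B_i$. The union bound $\bigl|\bigcup_i B_i\bigr|\le\sum_i|B_i|=p|U|-\sum_i|A_i|$ then rearranges to exactly the stated inequality.

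The paper gives no proof of its own and simply quotes the lemma from \cite{CFT20}. As far as I recall, the cited source proves it by induction on $p$, which is the alternative you mention. One writes $\bigl|\bigcap_{i=1}^{p}A_i\bigr|=\bigl|\bigcap_{i=1}^{p-1}A_i\bigr|+|A_p|-\bigl|\bigl(\bigcap_{i=1}^{p-1}A_i\bigr)\cup A_p\bigr|$, applies the induction hypothesis to the first term, and bounds $\bigl|\bigcup_{i=1}^{p-1}A_i\bigr|$ and the last union by $|U|$.

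Your complement argument does the whole count in one step. It amounts to summing over $x\in U$ the number of indices $i$ with $x\notin A_i$: this number is $0$ if $x\in\bigcap_i A_i$ and at least $1$ otherwise. This buys you two small things:
\begin{enumerate}[(i)]
\item It shows at once that equality holds exactly when the $B_i$ are pairwise disjoint, i.e.\ when every element of $U$ misses at most one $A_i$.
\item It works verbatim with complements taken in any finite set $X\supseteq U$, giving $\bigl|\bigcap_i A_i\bigr|\ge\sum_i|A_i|-(p-1)|X|$ directly. This is the form in which the paper actually applies the lemma, with $X=V_2$, $V_{t+1}$, $V_1$, and so on.
\end{enumerate}
The induction proves nothing more, but it reduces everything to the two-set identity $|A\cap B|=|A|+|B|-|A\cup B|$.
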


     The following result will be frequently used in our proofs, which was given in \cite{Z82}. 

     \begin{lem}(Zaslavsky\cite{Z82})\label{Z82}
		Let $G$ be a connected graph and $T$ a spanning tree of $G$. Then each
		switching equivalence class of signed graphs on the graph $G$ has a unique representative which is +1 on T. Indeed, given any prescribed sign function $\sigma_{T}$: $T \to \{+1,-1\}$, each switching class has a single representative which agrees with $\sigma_{T}$ on $T$.
	\end{lem}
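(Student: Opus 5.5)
We prove existence and uniqueness separately, working directly with the switching operation $\dot{G}\mapsto\dot{G}_U$. The first observation we record is that switching is composable: switching by $U$ and then by $W$ reverses an edge $uv$ exactly when it crosses an odd number of the cuts $(U,V\setminus U)$ and $(W,V\setminus W)$. This happens exactly when $uv$ crosses $(U\triangle W, V\setminus (U\triangle W))$. Hence $(\dot{G}_U)_W=\dot{G}_{U\triangle W}$, and a switching class is precisely $\{(G,\sigma)_U : U\subseteq V(G)\}$ for any one member $(G,\sigma)$. In particular, every signature on $G$ can be written as $\sigma$ multiplied by a function $\theta:V(G)\to\{\pm1\}$ via $\sigma'(uv)=\theta(u)\sigma(uv)\theta(v)$, where $U=\theta^{-1}(-1)$.

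For existence, fix $(G,\sigma)$ and the prescribed $\sigma_T$, and choose a root $v_0$ of the spanning tree $T$. For each vertex $v$ let $P_v$ be the unique $v_0$–$v$ path in $T$. Define
$$\theta(v)=\prod_{e\in E(P_v)}\sigma(e)\,\sigma_T(e), \qquad \theta(v_0)=1 .$$
If $uv$ is a tree edge with $u$ the parent of $v$, then $\theta(v)=\theta(u)\sigma(uv)\sigma_T(uv)$. Therefore $\theta(u)\sigma(uv)\theta(v)=\sigma_T(uv)$, since all values are $\pm1$. Switching by $U=\theta^{-1}(-1)$ therefore yields a signature in the same class that agrees with $\sigma_T$ on every edge of $T$. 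Taking $\sigma_T\equiv+1$ gives the first sentence of the lemma.

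For uniqueness, suppose $\sigma'$ and $\sigma''$ are in the same class and both agree with $\sigma_T$ on $T$. By the composability remark, $\sigma''=(\sigma')_U$ for some $U\subseteq V(G)$. Every edge of $T$ keeps its sign, so no edge of $T$ joins $U$ to $V\setminus U$. Since $T$ is spanning and connected, this forces $U=\emptyset$ or $U=V(G)$. In either case the cut $E(U,V\setminus U)$ is empty, so $\sigma''=\sigma'$ on all of $E(G)$.

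The only step requiring any care is the composability of switchings, which justifies that an arbitrary member of the class is a single switch of another. Connectivity of $G$ is used exactly to guarantee that a spanning tree exists and that the only cuts avoiding $T$ are trivial.
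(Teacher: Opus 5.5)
Your proof is correct and complete. The paper does not prove this lemma: it quotes it from Zaslavsky and uses it as a black box, so there is no in-paper argument to match against.

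What you give is the standard direct proof, and each step holds:
\begin{itemize}
\item The identity $(\dot{G}_U)_W=\dot{G}_{U\triangle W}$ shows that single switchings already exhaust the whole equivalence class.
\item The potential $\theta(v)=\prod_{e\in E(P_v)}\sigma(e)\sigma_T(e)$ along root paths of $T$ gives existence.
\item For uniqueness, a switching set that fixes every tree sign cannot cut any edge of $T$. Since $T$ is connected and spanning, this forces $U\in\{\emptyset,V(G)\}$.
\end{itemize}

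An alternative route to uniqueness uses the fact that switching preserves the sign of every cycle, and hence so does any chain of switchings. For $e\notin E(T)$, the sign of $e$ equals the sign of the fundamental cycle of $T+e$ times the product of the prescribed signs $\sigma_T$ on its tree edges. So $\sigma(e)$ is determined by $\sigma_T$ and the class. That version trades your composability-plus-cut argument for cycle-sign invariance. Existence still needs an explicit switching such as your $\theta$.

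One small wording fix. The sentence ``every signature on $G$ can be written as $\sigma$ multiplied by a function $\theta$'' should say every signature \emph{in the switching class of} $\sigma$. As written, it would assert that all signatures on $G$ are switching equivalent, which is false whenever $G$ contains a cycle. You only ever apply it to signatures in the same class, so the argument itself is unaffected.
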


    The following conclusion is crucial to the proof of Theorem \ref{thm::main}.

     \begin{lem}\label{S_1}
        Let $k$ and $m$ be integers with $k \ge 1$ and $m \ge 3$, and let $K_{n_1,n_2,\cdots,n_m}$ be a complete $m$-partite graph with parts of size $n_1, n_2, \cdots, n_m$, where each part has size at least $2k$.  If $\Gamma = (K_{n_1,n_2,\cdots,n_m}, \sigma)$ is a $C_{2k+1}^{-}$-free signed graph, then $\Gamma$ is balanced. 
    \end{lem}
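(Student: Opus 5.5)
The plan is to work in the cycle space over $\mathbb{Z}_2$. Since the sign of a cycle is multiplicative under symmetric difference of edge sets, $\sigma$ induces a homomorphism from the cycle space of $K_{n_1,\dots,n_m}$ to $\{\pm1\}$. It therefore suffices to exhibit a generating set of the cycle space consisting of positive cycles. A signed graph all of whose cycles are positive is balanced, which also matches the characterization recalled after Lemma \ref{Z82}. The case $k=1$ is immediate, since then $C_3^-$-freeness says every triangle is positive. So I will assume $k\ge 2$.

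\textbf{Step 1: generators.} I will use triangles together with $4$-cycles whose vertices lie in exactly two parts. Any cycle in a complete multipartite graph can be cut into shorter cycles by chords, as follows. Take a cycle $C$ of length at least $5$, and pick two vertices of $C$ at distance at least $2$ along $C$ that lie in different parts; they are adjacent. If no such pair exists, then every pair at distance at least $2$ lies in a common part, which quickly forces $C$ to be short. Induction on the length then shows the cycle space is generated by triangles and $4$-cycles. A $4$-cycle $abcd$ either uses only two parts ($a,c\in A$ and $b,d\in B$), or it has a chord and is the sum of two triangles. Hence triangles and two-part $4$-cycles generate.

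\textbf{Step 2: every two-part $4$-cycle is positive.} Let $abcd$ have $a,c\in A$ and $b,d\in B$. I will construct a path $P$ from $c$ to $a$ of length $2k-1$ that avoids $b$ and $d$. Choose a third part $D$ (possible since $m\ge3$). Start at $c$, go to $D$, alternate between $D$ and $A\setminus\{a,c\}$, and end with a step from $D$ to $a$. The resulting path uses $k$ vertices of $D$ and $k-2$ vertices of $A$ other than $a,c$, and both parts are large enough because $|A|,|D|\ge 2k$. Then $a\,b\,c\,P$ and $a\,d\,c\,P$ are both cycles of length $2k+1$, so both are positive. Their symmetric difference is $abcd$, so $\sigma(abcd)=+1$.

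\textbf{Step 3: every triangle is positive.} Let $xyz$ have $x\in V_1$, $y\in V_2$, $z\in V_3$. I will extend the triangle by a fan of $k-1$ two-part $4$-cycles. Replace the edge $xy$ by a path $x\,y_1\,x_1\,y_2\cdots x_{k-1}\,y$ that alternates between $V_2$ and $V_1$ using new vertices. This is available because $|V_1|,|V_2|\ge 2k$. The result is a cycle of length $3+2(k-1)=2k+1$, which is positive. It equals the triangle plus $k-1$ two-part $4$-cycles, namely $x_{i-1}y_i x_i y_{i+1}$-type squares that successively reroute the edge. Each of those squares is positive by Step 2, so the triangle is positive.

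Combining the three steps, every generator is positive, so $\Gamma$ is balanced. I expect the main obstacle to be the careful vertex-count bookkeeping in Steps 2 and 3: checking that the auxiliary vertices are distinct and avoid the fixed ones, and that the bound $|V_i|\ge 2k$ suffices in each construction. The generation claim in Step 1 is standard, but it should be written out cleanly.
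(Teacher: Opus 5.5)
Your overall strategy is sound and genuinely different from the paper's: you show that triangles and two-part $4$-cycles generate the cycle space over $\mathbb{Z}_2$, and then that each generator is positive. However, Step 2 fails as written because of a parity error. The path you describe alternates between the independent sets $A$ and $D$ and joins two vertices $c,a$ of $A$. It therefore lies in the bipartite graph $K[A,D]$ and has \emph{even} length. Equivalently, a path on $2k$ vertices alternating between two parts has its ends in different parts, so your count ($k$ vertices of $D$ and $k$ of $A$) cannot have both ends in $A$. Followed literally with $k$ vertices of $D$, your recipe gives a $c$--$a$ path of length $2k$. The cycles $abcP$ and $adcP$ then have length $2k+2$, and $C_{2k+1}^-$-freeness says nothing about them. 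Any $c$--$a$ path of odd length must visit a third part.

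The repair is easy. Pick $b'\in B\setminus\{b,d\}$, which exists since $|B|\ge 2k\ge 4$. Take $P=c\,d_1\,b'\,a_1\,d_2\,a_2\,d_3\cdots a_{k-2}\,d_{k-1}\,a$ with distinct $d_i\in D$ and $a_i\in A\setminus\{a,c\}$; for $k=2$ this is just $c\,d_1\,b'\,a$. It has $2k$ vertices, so length $2k-1$, and it avoids $b$ and $d$. It needs only $|A|\ge k$, $|D|\ge k-1$ and $|B|\ge 3$. With this $P$, your symmetric-difference argument (the edge sets of $abcP$ and $adcP$ differ exactly in $abcd$) goes through.

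There are two smaller slips. First, in Step 3 the squares $x_{i-1}y_ix_iy_{i+1}$ do not telescope to the $2k$-cycle $x\,y_1\,x_1\cdots y_{k-1}\,x_{k-1}\,y\,x$ once $k\ge 3$. For $k=3$ their sum is the $6$-cycle $x\,y_1\,x_1\,y\,x_2\,y_2\,x$. Use instead the fan $x\,y_i\,x_i\,y_{i+1}$ for $1\le i\le k-1$ with $y_k=y$, where the chords $xy_i$ cancel in pairs. Alternatively, use the rerouting squares $x_{i-1}\,y_i\,x_i\,y$ with $x_0=x$. Second, the case $k=1$ is not quite immediate: positivity of all triangles gives balance here only because of the multipartite structure. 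For any $e$ in a third part, $abcd$ is the sum of the triangles $eab$, $ebc$, $ecd$, $eda$. This should be stated, since your Step 2 construction does not exist when $k=1$.

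With these fixes your proof is a valid alternative to the paper's. The paper assumes a negative triangle $u_{1,1}u_{2,1}u_{3,1}$ and fixes signs by Zaslavsky's normalization around it. It then propagates constraints through a chain of claims: all paths of length $2k-3$ in $\Gamma[V_1\setminus\{u_{1,1}\},V_2\setminus\{u_{2,1}\}]$ are negative, hence all edges there are negative, and edges between $V_2\setminus\{u_{2,1}\}$ and $V_3\setminus\{u_{3,1}\}$ are positive. This continues until a negative $(2k+1)$-cycle appears. The case with no negative triangle is handled by the $k=1$ argument. Your cycle-space route replaces that delicate sign-propagation with two local identities: two $(2k+1)$-cycles differing by a square, and a $(2k+1)$-cycle equal to a triangle plus a fan of squares. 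It is shorter and easier to check, and for $k\ge 2$ it shows that parts of size $\max\{k,3\}$ already suffice instead of $2k$.
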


    \begin{proof}
        Let the partite sets of $\Gamma$ be $V_1, V_2, \cdots, V_m$, where for each integer $i \in [m]$, $V_i = \{u_{i,1}, u_{i,2}, \cdots, u_{i,n_i}\}$. Suppose to the contrary that $\Gamma$ is unbalanced, and then we consider the following two cases.

        {\flushleft {\it Case 1.} $k=1$.}

        In this case, $\Gamma$ is $C_{3}^{-}$-free. By Lemma \ref{Z82}, up to switching equivalence, we may assume that $\sigma (u_{1,1}u_{i,j})=+1$ for integer $i \in [2,m]$ and $j \in [n_{i}]$, $\sigma (u_{2,1}u_{1,z})=+1$ for integer $z \in [n_{1}]$. Since $\Gamma$ is $C_{3}^{-}$-free, all edges in $E(V_i,V_j)$ are positive for integers $i, j \in [2,m]$, and further, all edges in $E(V_1,V_i)$ are positive for integers $i\in [2,m]$. Thus, $\Gamma$ is balanced. 

        {\flushleft {\it Case 2.} $k \ge 2$.}

        In this case, if $\Gamma$ does not contains a negative triangle as a signed subgraph, then similarly to Case 1, $\Gamma$ is a balanced signed graph. Therefore, we may assume that $\Gamma$ contains a negative triangle as a signed subgraph. Without loss of generality, we assume the vertex set of the negative triangle is $\{ u_{1,1}, u_{2,1}, u_{3,1} \}$. By Lemma \ref{Z82}, up to switching equivalence, we may assume that $\sigma (u_{1,1}u_{i,j})=+1$ for integer $i \in \{2,3\}$ and $j \in [n_{i}]$, $\sigma (u_{2,1}u_{1,z})=+1$ for integer $z \in [n_1]$, and $\sigma (u_{2,1}u_{3,1})=-1$. 

        \begin{claim}\label{claim::3.0}
            Any path of length $2k-3$ in $\Gamma[V_1\setminus\{u_{1,1}\},V_2\setminus\{u_{2,1}\}]$ is a negative path.
        \end{claim}
        \begin{proof}
           Suppose to the contrary that $u_{2,2}\dot{P}_{2k-2}'u_{1,2}$ is a positive path of length $2k-3$ in $\Gamma[V_1\setminus\{u_{1,1}\},V_2\setminus\{u_{2,1}\}]$. Then $u_{3,1}u_{1,1}(u_{2,2}\dot{P}_{2k-2}'u_{1,2})u_{2,1}u_{3,1}$ forms a negative cycle of length $2k+1$ in $\Gamma$, which contradicts the assumption.
        \end{proof}

        \begin{claim}\label{claim::3.1}
            All edges in $E(u_{3,1},V_1)$ are positive edges. 
        \end{claim}

        \begin{proof}
            Suppose to the contrary that we assume $\sigma(u_{3,1}u_{1,2})=-1$. Choose an arbitrary path of length $2k-3$ with endpoints $u_{1,2}$ and $u_{2,2}$ in $\Gamma [V_1 \setminus \{ u_{1,1} \}, V_2 \setminus \{ u_{2,1} \}]$, and denote it by $\dot{P}_{2k-2}^{1}$. By Claim \ref{claim::3.0}, $\dot{P}_{2k-2}^{1}$ is a negative path. Furthermore, we observe that $u_{3,1}u_{2,1}u_{1,1}(u_{2,2}\dot{P}_{2k-2}^{1}u_{1,2})u_{3,1}$ forms a negative cycle of length $2k+1$ in $\Gamma$, which is a contradiction. Therefore, all edges in $E(u_{3,1},V_1)$ are positive edges.
        \end{proof}

        \begin{claim}\label{claim::3.2}
            All edges in $E(V_1 \setminus \{ u_{1,1} \} ,V_2 \setminus \{ u_{2,1} \})$ are negative edges. 
        \end{claim}

        \begin{proof}

            By Claim \ref{claim::3.0}, the conclusion is true when $k=2$. We next assume that $k \ge 3$.
            Suppose to the contrary that there exists a positive edge $uv$ in $E(V_1 \setminus \{ u_{1,1} \} ,V_2 \setminus \{ u_{2,1} \})$, where $u\in V_1\setminus\{u_{1,1}\}$ and $v\in V_2\setminus\{u_{2,1}\}$. Take an arbitrary vertex $w \in V_2 \setminus \{u_{2,1}\}$. Then we can choose a path \(\dot{P}_{2k-2}^{2}\) in $\Gamma [V_1 \setminus \{ u_{1,1} \}, V_2 \setminus \{ u_{2,1} \}]$ with $u$ and $w$ as its endpoints, \(uv\in E(\dot{P}_{2k-2}^{2})\), and length \(2k-3\). We first prove that for any vertex $y \in V_1 \setminus (V(\dot{P}_{2k-2}^{2}) \cup \{ u_{1,1} \})$, $\sigma (wy)=+1$. Otherwise, there exists a vertex $y\in V_1\setminus\bigl(V(\dot{P}_{2k-2}^{2})\cup\{u_{1,1}\}\bigr)$ such that $\sigma(wy)=-1$, and then $(v\dot{P}_{2k-2}^{2}w)y$ is a positive path of length $2k-3$ in $\Gamma$, which contradicts Claim \ref{claim::3.0}.

            We next prove that for any vertex $y\in(V_1\setminus\{u_{1,1}\})\cap V(\dot{P}_{2k-2}^{2})$, $\sigma(wy)=+1$. Let $x\in V_1\setminus\bigl(V(\dot{P}_{2k-2}^{2})\cup\{u_{1,1}\}\bigr)$. Since $|V_1 \setminus (V(\dot{P}_{2k-2}^{2}) \cup \{ u_{1,1} \})| \ge k$ and $|V_2 \setminus ((V(\dot{P}_{2k-2}^{2})\setminus \{w\}) \cup \{ u_{2,1} \})| \ge k$, we can choose a negative path of length $2k-3$ with $w$ as an endpoint that contains the edge $wx$ in $\Gamma\bigl[V_1\setminus\bigl(V(\dot{P}_{2k-2}^{2})\cup\{u_{1,1}\}\bigr),V_2\setminus\bigl((V(\dot{P}_{2k-2}^{2})\setminus\{w\})\cup\{u_{2,1}\}\bigr)\bigr]$, denoted by $\dot{P}_{2k-2}^{3}$. Let $u'$ be the other endpoint of $\dot{P}_{2k-2}^{3}$, and further let $v'$ be a neighbor of $u'$ in $V_2\setminus(V(\dot{P}_{2k-2}^{2})\cup\{u_{2,1}\})$. Note that $\sigma (wx)=+1$. Thus, $u'v'$ is a positive edge; otherwise, $(y\dot{P}_{2k-2}^{3}u')v'$ is a positive path of length $2k-3$, which contradicts Claim \ref{claim::3.0}. Now we can choose a negative path $\dot{P}_{2k-2}^{4}$ in $\Gamma\bigl[V_1\setminus\bigl(V(\dot{P}_{2k-2}^{2})\cup\{u_{1,1}\}\bigr),V_2\setminus\bigl((V(\dot{P}_{2k-2}^{2})\setminus\{w\})\cup\{u_{2,1}\}\bigr)\bigr]$ with endpoints $u'$ and $w$, $u'v' \in E(\dot{P}_{2k-2}^{4})$, and length $2k-3$. For any vertex $y\in (V_1\setminus\{u_{1,1}\})\cap V(\dot{P}_{2k-2}^{2})$, we  have $\sigma(wy)=+1$; otherwise, the path $v'\dot{P}_{2k-2}^{4}wy$ is a positive path of length $2k-3$ in $\Gamma$, which contradicts Claim \ref{claim::3.0}. Thus, all edges in $E(w, V_1 \setminus \{ u_{1,1} \})$ are positive edges. Since $w$ is chosen arbitrarily, all edges in $E(V_1 \setminus \{ u_{1,1} \} ,V_2 \setminus \{ u_{2,1} \})$ are positive edges, which contradicts Claim \ref{claim::3.0}.
        \end{proof}

        \begin{claim}\label{claim::3.3}
            All edges in $E(V_2 \setminus \{ u_{2,1} \} ,V_3 \setminus \{ u_{3,1} \})$ are positive edges. 
        \end{claim}

        \begin{proof}
            Suppose to the contrary that we assume $\sigma(u_{2,2}u_{3,2})=-1$. From Claim \ref{claim::3.2}, we can choose a positive path \(\dot{P}_{2k-1}\) of length \(2k-2\) in $\Gamma [V_1 \setminus \{ u_{1,1} \}, V_2 \setminus \{ u_{2,1} \}]$ with $u_{2,2}$ and $u_{2,3}$ as its endpoints. Then $u_{1,1}(u_{2,3}\dot{P}_{2k-1}u_{2,2})u_{3,2}u_{1,1}$ forms a negative cycle of length $2k+1$ in $\Gamma$, which is a contradiction. 
        \end{proof}

        By Claim \ref{claim::3.2}, there is a negative path \(\dot{P}_{2k}^{1}\) of length \(2k-1\) in $\Gamma [V_1, V_2]$ with $u_{1,1}$ and $u_{2,1}$ as its endpoints. Thus $\sigma (u_{2,1}u_{3,2})=-1$, as otherwise $u_{3,2}(u_{1,1}\dot{P}_{2k}^{1}u_{2,1})u_{3,2}$ forms a negative cycle of length $2k+1$ in $\Gamma$. On the other hand, by Claim \ref{claim::3.3}, there is a positive path \(\dot{P}_{2k}^{2}\) of length \(2k-1\) in $\Gamma [V_2 \setminus \{ u_{2,1} \} \cup V_3 \cup \{ u_{1,1} \}]$ with $u_{1,1}$ and $u_{3,2}$ as its endpoints. It is easy to check that $u_{2,1}(u_{3,2}\dot{P}_{2k}^{2}u_{1,1})u_{2,1}$ forms a negative cycle of length $2k+1$ in $\Gamma$, which is a contradiction. This completes the proof.
    \end{proof}
	
    \section{Proof of the Theorem \ref{thm::main}}

    Let $\dot{G}=(G,\sigma)$ be a signed graph satisfying the conditions of Theorem \ref{thm::main} with the maximum number of edges. It is easy to see that the underlying graph $G$ is connected and satisfies $e(G)\ge e(C_{3}^{-}\cdot T_r(n-2))$. By applying the Erd\H{o}s–Simonovits stability theorem, we can obtain the following lemmas, which describe the structural properties of the underlying graph $G$.

    \begin{lem}\label{G_1}
        For every $\varepsilon >0$, there exists an integer $n_0$ such that if $n \ge n_0$, then we have $e(G) \ge e (T_{r}(n)) - \varepsilon^2n^2$. Moreover, $G$ has a partition $V(G) = V_1 \cup \cdots \cup V_r$ such that 
        \begin{enumerate}[(i)]
        \item $\sum\limits_{1 \le i < j \le r} e(V_i, V_j)$ attains the maximum; 
        \item $\sum\limits_{i=1}^r e(V_i) \le \varepsilon^2n^2$; 
        \item for each integer $i \in [r]$, $||V_i| - \frac{n}{r}| \le 2\varepsilon n$.
        \end{enumerate}
    \end{lem}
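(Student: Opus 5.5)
The plan is to combine the lower bound coming from the extremality of $\dot{G}$ with Lemma \ref{ESthm}, and then refine the resulting partition in the standard way.

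\emph{Edge lower bound.} The signed graph $C_3^{-}\cdot T_r(n-2)$ is unbalanced. Its only negative cycle is the triangle, since the blocks are the triangle and $T_r(n-2)$, and the latter is all positive. Hence it is $C_{2k+1}^{-}$-free for $k\ge 2$. Its underlying graph has clique number $r$. Moreover, any copy of $F$ in it would have to lie in a single block, because $F$ is color-critical with $\chi(F)=r+1\ge 4$ and is therefore $2$-connected. Since $\chi(T_r(n-2))=r$ and the triangle has chromatic number $3<r+1$, neither block contains $F$. So this graph is admissible, and maximality gives
$$e(G)\ge e(T_r(n-2))+3 = e(T_r(n)) - O(n).$$
In particular $e(G)\ge e(T_r(n))-\varepsilon^2 n^2$ once $n\ge n_0$.

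\emph{Choice of partition.} Since $G$ is $F$-free with $\chi(F)=r+1$, we apply Lemma \ref{ESthm} with $\varepsilon^2$ in place of $\varepsilon$. For $n$ large this yields a partition with $\sum_i e(V_i)<\varepsilon^2n^2$. Among all $r$-partitions, we then take one maximizing $\sum_{i<j}e(V_i,V_j)$, which is (i). This is equivalent to minimizing $\sum_i e(V_i)$, so (ii) still holds for the chosen partition.

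\emph{Part sizes.} For (iii), write $|V_i|=n/r+a_i$ with $\sum_i a_i=0$. Then
$$e(G)\le \sum_{i<j}|V_i||V_j|+\varepsilon^2 n^2 = \frac{r-1}{2r}n^2-\frac12\sum_i a_i^2+\varepsilon^2n^2 .$$
Compare this with $e(G)\ge \frac{r-1}{2r}n^2-O(n)$, using $e(T_r(n))\ge \frac{r-1}{2r}n^2-r$. This gives $\frac12\sum_i a_i^2\le \varepsilon^2n^2+O(n)\le 2\varepsilon^2n^2$ for large $n$. Hence each $|a_i|\le 2\varepsilon n$.

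The only points needing care are that $C_3^{-}\cdot T_r(n-2)$ is $F$-free, which relies on the block decomposition and the $2$-connectivity of color-critical graphs, and that $O(n)$ is absorbed into $\varepsilon^2n^2$; both are routine.
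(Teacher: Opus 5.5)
Your proof follows the same route as the paper's: the lower bound from the admissible graph $C_3^{-}\cdot T_r(n-2)$, Lemma~\ref{ESthm} applied with $\varepsilon^2$, passing to a max-cut partition (which can only decrease $\sum_i e(V_i)$), and a quadratic comparison of part sizes. For (iii) you use the identity $\sum_{i<j}|V_i||V_j|=\frac{r-1}{2r}n^2-\frac12\sum_i a_i^2$ together with the $O(n)$ deficit. The paper instead isolates the most unbalanced part and uses Cauchy--Schwarz to get the sharper coefficient $\frac{r}{2(r-1)}$, which is what lets it work from the weaker bound $e(T_r(n))-\varepsilon^2n^2$. Your $O(n)$ deficit also properly verifies the hypothesis $e(G)\ge e(T_r(n))-\beta n^2$ of Lemma~\ref{ESthm} for large $n$, a point the paper passes over.

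One justification is wrong, however. Under the paper's definition (some edge whose deletion lowers $\chi$), a color-critical graph need not be $2$-connected; $K_{r+1}$ with a pendant edge is a counterexample. So your claim that a copy of $F$ must lie in a single block is unsupported. The conclusion is still immediate: the underlying graph of $C_3^{-}\cdot T_r(n-2)$ is a triangle and $T_r(n-2)$ glued at one vertex. It is therefore $r$-colorable for $r\ge 3$, and so contains no subgraph of chromatic number $r+1$.
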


    \begin{proof}
        Let $n_0=\frac{3r-2}{r\varepsilon^2}$. It is straightforward to check that
        \begin{equation}\label{eq::1}
        \begin{aligned}
            e(G)&=e(\dot{G}) \\
            &\ge e(C_{3}^{-} \cdot T_r(n-2)) \\
            &\ge \frac{r-1}{2r}(n-2)^2-\frac{r}{8}+3 \\
            &\ge e(T_{r}(n))-\varepsilon^2n^2.
        \end{aligned}
        \end{equation}
        From Lemma \ref{ESthm}, there exists a vertex partition $V(G) = U_1 \cup \cdots \cup U_r$ with $\left\lfloor \frac{n}{r} \right\rfloor \le |U_i| \le \left\lceil \frac{n}{r} \right\rceil$ such that $\sum_{i=1}^r e(U_i) \le \varepsilon^2n^2$. Furthermore, we choose a partition $V(G) = V_1 \cup \cdots \cup V_r$ such that $\sum_{1 \le i < j \le r} e (V_i, V_j)$ attains the maximum. Hence,  $\sum_{i=1}^r e(V_i) \le \sum_{i=1}^r e(U_i) \le \varepsilon^2n^2$. Next, we prove that for each $i \in [r]$, $\left||V_i| - \frac{n}{r}\right| \le 2\varepsilon n$. Let $a=\max \left\{ \left||V_j| - \frac{n}{r}\right|, i \in [r] \right\}$. We have

        \begin{equation}\label{eq::2}
        \begin{aligned}
            e(G) &= \sum_{1 \leqslant i < j \le r} e (V_i, V_j) + \sum_{i=1}^r e(V_i) \\
            & \le \sum_{1 \leqslant i < j \le r} |V_i||V_j| + \varepsilon^2n^2\\
            & =|V_1|(n - |V_1|) + \sum_{2 \le i < j \le r} |V_i||V_j| + \varepsilon^2n^2\\
            &= |V_1|(n - |V_1|) + \frac{1}{2} \left( ( \sum_{i=2}^r |V_i| )^2 - \sum_{i=2}^r |V_i|^2\right) + \varepsilon^2 n^2\\
            &\le |V_1|(n - |V_1|) + \frac{1}{2}(n - |V_1|)^2 - \frac{1}{2(r - 1)}(n - |V_1|)^2 + \varepsilon ^2 n^2 \\
            &< \frac{r-1}{2r}n^2 - \frac{r}{2r-2}a^2 + \varepsilon^2 n^2,
        \end{aligned}
        \end{equation}
        where the second last inequality holds by H\H{o}lder’s inequality. 

        Combining \eqref{eq::1} and \eqref{eq::2}, we conclude that $\max \left\{ \left||V_j| - \frac{n}{r}\right|, i \in [r] \right\} \le 2\varepsilon n$.
    \end{proof}

    Let $\varepsilon$ be a sufficiently small positive constant. We denote $L:=\left\{ v \in V(G) \mid d(v) \leq \left(1 - \frac{1}{r} - 4\varepsilon\right) n \right\}$, and $W := \bigcup_{i=1}^r\left\{ v \in V_i \mid d_{V_i}(v) \ge 2\varepsilon n \right\}$. 

    \begin{lem}\label{G_2}
        The following statements hold.
        \begin{enumerate}[(i)]
            \item $|L| \le \varepsilon n$;
            \item $|W| \le \varepsilon n$.
        \end{enumerate}
    \end{lem}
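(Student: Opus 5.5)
Both parts follow from Lemma \ref{G_1} by double counting: we compare the total number of edges with the approximate Tur\'an count on one side and with the internal edges $\sum_i e(V_i)$ on the other.

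For (ii), every vertex $v\in W\cap V_i$ has $d_{V_i}(v)\ge 2\varepsilon n$. Summing over $W$ gives
$$
\sum_{i=1}^r 2e(V_i)=\sum_{i=1}^r\sum_{v\in V_i} d_{V_i}(v)\ \ge\ \sum_{v\in W} d_{V_i}(v)\ \ge\ 2\varepsilon n|W|.
$$
By Lemma \ref{G_1}(ii) the left side is at most $2\varepsilon^2n^2$. Hence $|W|\le \varepsilon n$.

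For (i), suppose for contradiction that $|L|>\varepsilon n$, and take a subset $L'\subseteq L$ with $|L'|=\lfloor\varepsilon n\rfloor$. Delete $L'$ from $G$. The remaining graph $G'=G-L'$ has $n'=n-|L'|$ vertices and is still $F$-free. Since each deleted vertex has degree at most $(1-\frac1r-4\varepsilon)n$, we get
$$
e(G')\ \ge\ e(G)-|L'|\Bigl(1-\frac1r-4\varepsilon\Bigr)n\ \ge\ e(T_r(n))-\varepsilon^2n^2-|L'|\Bigl(1-\frac1r-4\varepsilon\Bigr)n,
$$
using the bound $e(G)\ge e(T_r(n))-\varepsilon^2 n^2$ from Lemma \ref{G_1}. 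On the other hand, since $G'$ is $F$-free on $n'$ vertices, Lemma \ref{Sthm} (valid because $n'\ge (1-\varepsilon)n$ is large) gives $e(G')\le e(T_r(n'))\le \frac{r-1}{2r}n'^2$.

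It remains to compare the two bounds. We have $e(T_r(n))\ge \frac{r-1}{2r}n^2-\frac r8$. Writing $t=|L'|\approx\varepsilon n$ and $\frac{r-1}{r}=1-\frac1r$,
$$
\frac{r-1}{2r}n^2-\frac{r-1}{2r}(n-t)^2=\Bigl(1-\frac1r\Bigr)nt-\frac{r-1}{2r}t^2.
$$
Combining this with the lower bound on $e(G')$, the two bounds are compatible only if
$$
4\varepsilon n t\ \le\ \varepsilon^2n^2+\frac{r-1}{2r}t^2+O(1).
$$
With $t\approx\varepsilon n$ the left side is about $4\varepsilon^2n^2$, while the right side is at most $\varepsilon^2n^2+\frac12\varepsilon^2n^2+O(1)$. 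This is a contradiction for large $n$, so $|L|\le\varepsilon n$.

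The only delicate point is this bookkeeping in (i): we remove exactly $\lfloor \varepsilon n\rfloor$ low-degree vertices rather than all of $L$, so that the quadratic term $\frac{r-1}{2r}t^2$ stays dominated by the linear gain $4\varepsilon nt$. We also account for the floor and the $O(1)$ error in $e(T_r(n))$, which is harmless for large $n$.
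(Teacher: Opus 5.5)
Your proof is correct and follows the paper's argument essentially step for step. Part (ii) is the same double count of internal degrees against $\sum_i e(V_i)\le\varepsilon^2n^2$, and part (i) likewise deletes $\lfloor\varepsilon n\rfloor$ vertices of $L$ and contradicts Lemma~\ref{Sthm} for the remaining $F$-free graph. The only difference is cosmetic: you phrase the final comparison as $4\varepsilon nt\le\varepsilon^2n^2+\frac{r-1}{2r}t^2+\frac r8$, where the floor in $t$ costs only an $O(n)$ term that is negligible against $\frac52\varepsilon^2n^2$, whereas the paper shows directly that the graph after deletion has more than $e(T_r(n-\lfloor\varepsilon n\rfloor))$ edges.
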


    \begin{proof}
        We first show that $|L| \le \varepsilon n$. Suppose to the contrary that $|L| > \varepsilon n$. Then there exists a subset $N \subseteq L$ with $|N| = \lfloor\varepsilon n \rfloor$. Thus, we have 
        \begin{equation}
        \begin{aligned}
            e(G - N) &\ge e(G) - \sum_{v \in N} d_G(v) \\
            &\ge e\left(T_{r}(n)\right) - \varepsilon^2n^2 - \varepsilon n \left(1 - \frac{1}{r} - 4\varepsilon \right)n\\
            &\ge \frac{1}{2}\left(1-\frac{1}{r}\right)n^2-\frac{r}{8}-\varepsilon^2n^2-\varepsilon n \left(1 - \frac{1}{r} - 4\varepsilon \right)n\\
            &= \frac{1}{2}\left(1-\frac{1}{r}\right)n^2-\left(1-\frac{1}{r}\right)\varepsilon n^2+3\varepsilon^2n^2-\frac{r}{8}\\
            &> \frac{1}{2}\left(1-\frac{1}{r}\right)(n-\varepsilon n+2)^2\\
            &> \frac{1}{2}\left(1-\frac{1}{r}\right)(n-\lfloor\varepsilon n\rfloor+1)^2\\
            &\ge e(T_{r}(n-\lfloor\varepsilon n\rfloor)).
        \end{aligned}
        \nonumber
        \end{equation}
        By Lemma \ref{Sthm}, $G - N$ contains a copy of $F$, a contradiction. Thus, $|L| \le \varepsilon n$.

        Next we show that $|W| \le \varepsilon n$. By Lemma \ref{G_1}, we have $\sum_{i=1}^r e(V_i) \le \varepsilon^2 n^2$. Let $W_i = W \cap V_i$ for all $i \in [r]$. Then
        \begin{equation}
        \begin{aligned}
            \sum_{i=1}^r e(V_i) &= \sum_{i=1}^r \left( \frac{1}{2}\sum_{v \in V_i} d_{V_i}(v) \right) \\
            &\ge \frac{1}{2}\sum_{i=1}^r \sum_{v \in W_i} d_{V_i}(v) \\
            &\ge |W| \varepsilon n .
            \end{aligned}
            \nonumber
        \end{equation}
        Thus, $|W| \le \varepsilon n$.
    \end{proof}

    For convenience, let $|V(F)| = \ell$.

    \begin{lem}\label{G_3}
        $W \subseteq L$.
    \end{lem}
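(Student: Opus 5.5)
We argue by contradiction: suppose some vertex $v \in W \setminus L$. Say $v \in V_1$, so $d_{V_1}(v) \ge 2\varepsilon n$. By the maximality of the partition in Lemma \ref{G_1}(i), moving $v$ to another part cannot increase the number of crossing edges. Hence $d_{V_i}(v) \ge d_{V_1}(v) \ge 2\varepsilon n$ for every $i \in [r]$, i.e.\ $v$ has at least $2\varepsilon n$ neighbours in every part. The plan is to use this, together with $d(v) > (1-\frac{1}{r}-4\varepsilon)n$, to embed a copy of $F$ in $G$. This contradicts that $G$ is $F$-free.

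First we pass to good vertices. For each $i$, let $N_i := N_{V_i}(v) \setminus (L \cup W)$. By Lemma \ref{G_2}, $|L \cup W| \le 2\varepsilon n$, so this bound alone is not enough. We therefore sharpen the count using $d(v)$. Since
\[
d(v) = \sum_i d_{V_i}(v) > \left(1-\frac{1}{r}-4\varepsilon\right)n
\]
and each $|V_i| \le \frac{n}{r} + 2\varepsilon n$, we get for every $i$
\[
d_{V_i}(v) \ge d(v) - \sum_{j \ne i} |V_j| \ge \frac{n}{r} - O(\varepsilon n).
\]
Thus $v$ is adjacent to almost all of every part, and $|N_i| \ge \frac{n}{r} - O(\varepsilon n)$. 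This is stronger than the $2\varepsilon n$ bound. It is the step I would check most carefully, since the constants $4\varepsilon$ and $2\varepsilon$ must combine so that $|N_i|$ is linear in $n$.

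Next we find a complete $r$-partite structure. Each $u \in N_i$ lies outside $L \cup W$, so $d(u) > (1-\frac{1}{r}-4\varepsilon)n$ and $d_{V_i}(u) < 2\varepsilon n$. Hence $u$ misses at most $O(\varepsilon n)$ vertices of each $V_j$ with $j \ne i$. We greedily choose vertex sets $X_i \subseteq N_i$ with $|X_i| = \ell$, forming a complete $r$-partite graph $K_{\ell,\ldots,\ell}$. When choosing a new vertex in $N_j$, we use Lemma \ref{lem::jh} to intersect the neighbourhoods in $V_j$ of the at most $r\ell$ previously chosen vertices. The intersection has size at least
\[
|N_j| - r\ell\cdot O(\varepsilon n) > 0
\]
for small $\varepsilon$ and large $n$.

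Finally we build $F$. Since $v$ is adjacent to every vertex of $\bigcup_i X_i$, the sets $X_1 \cup \{v\}, X_2, \ldots, X_r$ span a complete $r$-partite graph plus the vertex $v$ joined to everything. In particular $v$ has an edge into $X_1$, so inside part $1$ we have $\ell$ vertices together with one extra edge. Because $F$ is color-critical with $\chi(F)=r+1$, there is an edge $e$ with $F-e$ $r$-colorable. Place the two endpoints of $e$ at $v$ and a vertex of $X_1$, and the rest of $F$ according to an $r$-coloring of $F-e$ in which both endpoints of $e$ receive colour $1$. This gives $F \subseteq G$, a contradiction, so $W \subseteq L$.

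The main obstacle I expect is the degree-counting step, i.e.\ ensuring the $N_i$ have linear size after removing $L \cup W$. If the $\frac{n}{r}$-type lower bound fails, the fallback is the weaker bound $|N_i| \ge 2\varepsilon n - |L \cup W|$. That bound requires tightening Lemma \ref{G_2} to $|L \cup W| \le \varepsilon n$, or working with $\varepsilon^2$-scale bounds on $W$, which the proof of Lemma \ref{G_2} actually yields: $|W| \le \varepsilon n$, obtained from $\sum_i e(V_i) \le \varepsilon^2 n^2$.
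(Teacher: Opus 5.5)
There is a genuine gap, and it is at the degree-counting step you flagged. The inequality $d_{V_i}(v)\ge d(v)-\sum_{j\ne i}|V_j|$ is valid, but its right-hand side is only $d(v)-(n-|V_i|)>-6\varepsilon n$, not $\frac nr-O(\varepsilon n)$. The degree $d(v)$ is only about the size of $r-1$ parts, and you subtract $r-1$ parts from it. Nothing in the hypotheses forces $v$ to be adjacent to almost all of each part. For example, $d_{V_i}(v)=d(v)/r$ for every $i$ is consistent with everything assumed, and then $v$ misses about $n/r^2$ vertices of each part.

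This leaves two claims unjustified. First, the linear lower bound on $|N_i|$ for $i\ge 2$. Second, your requirement $X_1\subseteq N_1$ with $|X_1|=\ell$, i.e.\ $\ell$ neighbours of $v$ in $V_1\setminus(L\cup W)$. The fallback does not help. Lemma~\ref{G_2} gives only $|L\cup W|\le 2\varepsilon n$, so $2\varepsilon n-|L\cup W|$ may be $0$. More fundamentally, any lower bound on $|N_j|$ of order $\varepsilon n$ can never beat the $r\ell\cdot O(\varepsilon n)$ loss in your greedy intersection.

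The missing ingredient is to use maximality of the partition more fully, as the paper does. From $d_{V_1}(v)\le d_{V_i}(v)$ for all $i$ you get $d_{V_1}(v)\le d(v)/r$. Then, for $i\ne 1$, subtract only $d_{V_1}(v)$ and the other $r-2$ parts:
\[
d_{V_i}(v)\ \ge\ \Bigl(1-\frac1r\Bigr)d(v)-(r-2)\Bigl(\frac1r+2\varepsilon\Bigr)n\ >\ \frac{n}{r^2}-\Bigl(2r-\frac4r\Bigr)\varepsilon n .
\]
This constant $1/r^2$ is independent of $\varepsilon$, which is what your greedy step needs.

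In part $1$ you need only one neighbour $x$ of $v$ in $V_1\setminus(L\cup W)$. It exists because $d_{V_1}(v)\ge 2\varepsilon n>|(L\cup W)\setminus\{v\}|$. The other $\ell-1$ vertices of $X_1$ can be arbitrary vertices of $V_1\setminus(L\cup W)$, since your embedding of $F$ uses only the single edge $vx$ inside part $1$. The paper phrases this contrapositively: such an $x$ would produce $F$, so $d_{V_1}(v)\le |L|+|W|-1<2\varepsilon n$. With these two corrections your embedding argument goes through.
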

    
    \begin{proof}
        Suppose to the contrary that there exists a vertex \( u_{1,1} \in W \) and \( u_{1,1} \notin L \). Without loss of generality, assume \( u_{1,1} \in V_1 \). Since \( V(G) = V_1 \cup \cdots \cup V_r \) is the partition that maximizes \( \sum_{1 \leq i < j \leq r} e(V_i, V_j) \), it follows that \( d_{V_1}(u_{1,1}) \leq d_{V_i}(u_{1,1}) \) for every \( i \in [2, r] \). This implies \( d(u_{1,1}) \geq r d_{V_1}(u_{1,1}) \), or equivalently \( d_{V_1}(u_{1,1}) \leq d(u_{1,1})/r \). Furthermore, since \( u_{1,1} \notin L \), we have 
        \begin{equation}\label{eq::3}
        \begin{aligned}
        d_{V_2}(u_{1,1}) &\geq d(u_{1,1}) - d_{V_1}(u_{1,1}) - (r - 2)\left( \frac{1}{r} + 2\varepsilon \right)n \\
        &\geq \left( 1 - \frac{1}{r} \right)d(u_{1,1}) - (r - 2)\left( \frac{1}{r} + 2\varepsilon \right)n \\
        &> \left( 1 - \frac{1}{r} \right)\left(1 - \frac{1}{r} - 4\varepsilon\right) n - (r - 2)\left( \frac{1}{r} + 2\varepsilon \right)n \\
        &= \frac{n}{r^2} - (2r - \frac{4}{r})\varepsilon n.
        \end{aligned}
        \end{equation}
        Note that $|L| \le \varepsilon n$ and $|W| \le \varepsilon n$. For sufficiently large n, we have
        $$
        |V_i \setminus (L \cup W)| \geq \left( \frac{1}{r} - 2\varepsilon \right)n - 2\varepsilon n\geq \ell.
        $$
        We claim that $u_{1,1}$ has no neighbors in $V_1 \setminus (W \cup L)$. Assume $u_{1,2}$ be a neighbor of $u_{1,1}$ in $ V_1 \setminus (W \cup L) $. Let $ u_{1,3}, \cdots, u_{1,\ell} $ be a subset of $ V_1 \setminus (W \cup L) $ that does not include $u_{1,1}$ and $u_{1,2}$. According to the definitions of $L$ and $W$, for any $i \in [2,\ell]$, we have $(1 - \frac{1}{r} - 4\varepsilon) n$, and $d_{V_1}(u_{1,i})<2 \varepsilon n$. Thus, 
        \begin{equation}\label{eq::4}
        \begin{aligned}
        d_{V_2}(u_{1,i}) &\geq d(u_{1,i}) - d_{V_1}(u_{1,i}) - (r - 2)\left( \frac{1}{r} + 2\varepsilon \right)n \\
        &\geq (1 - \frac{1}{r} - 4\varepsilon) n - 2 \varepsilon n - (r - 2)\left( \frac{1}{r} + 2\varepsilon \right)n \\
        &= \frac{n}{r}-(2r+2)\varepsilon n.
        \end{aligned}
        \end{equation}
        
        Combining \eqref{eq::3} and \eqref{eq::4}, by Lemma \ref{lem::jh}, we have
        
        \begin{equation}
        \begin{aligned}
        \left|  (\bigcap_{\substack{i \in [\ell] }} N_{V_{2}}(u_{1,i}))  \setminus (L \cup W)) \right| 
        &\geq d_{V_2}(u_{1,1}) + \sum_{i=2}^{\ell} d_{V_2}(u_{1,i}) - (\ell - 1)|V_2| - |L| - |W| \\
        &>\frac{n}{r^2} - (2r - \frac{4}{r})\varepsilon n + (\ell - 1)\left(\frac{n}{r}-(2r+2)\varepsilon n\right)\\
        &~~~~-(\ell - 1)\left(\frac{n}{r}+2\varepsilon n\right)-2\varepsilon n\\
        &= \frac{n}{r^2} - \left( 2r\ell+4\ell - \frac{4}{r} - 2\right)\varepsilon n.
        \end{aligned}
        \nonumber
        \end{equation}
        This implies that, for sufficiently large $n$, there exist $\ell$ vertices $u_{2,1}, u_{2,2}, \cdots, u_{2,\ell} $ in $V_2 \setminus (L \cup W)$ such that the subgraph induced by $\{u_{1,1}, u_{1,2}, \cdots, u_{1,\ell}\}$ and $\{u_{2,1}, u_{2,2}, \cdots, u_{2,\ell}\}$ contains a complete bipartite graph. Assume, for any integer $t$ with $2 \leq t \leq r - 1$, that the subgraph induced by $\{u_{1,1}, u_{1,2}, \cdots, u_{1,\ell}\}, \{u_{2,1}, \cdots, u_{2,\ell}\}, \cdots, \{u_{t,1}, \cdots, u_{t,\ell}\}$ contain a complete $t$-partite graph. Analogously to \eqref{eq::3} and \eqref{eq::4}, we obtain that, 
        $$
        d_{V_{t+1}}(u_{1,1}) > \frac{n}{r^2} - (2r - \frac{4}{r})\varepsilon n,
        $$
        and for each $i \in [t]$ and $j \in [\ell]$(with $u_{i,j} \neq u_{1,1}$),
        $$
        d_{V_{t+1}}(u_{i,j}) > \frac{n}{r} - (2r+2)\varepsilon n.
        $$
        Consider the common neighbors of these vertices in $V_{t+1}$. By Lemma \ref{lem::jh}, we have
        \begin{equation}
        \begin{aligned}
        \left|  (\bigcap_{\substack{i \in [t] , \\ j \in [\ell]}} N_{V_{t+1}}(u_{i,j}))  \setminus (L \cup W)) \right| &\ge \frac{n}{r^2} - \left(2r - \frac{4}{r}\right)\varepsilon n + (t\ell-1)\left(\frac{n}{r}-(2r+2)\varepsilon n\right)\\
        &~~~~-(t\ell-1)\left(\frac{n}{r}+2\varepsilon n\right) - 2\varepsilon n\\
        &= \frac{n}{r^2}-\left(  2rt\ell + 4t\ell-2-\frac{4}{r}\right)\varepsilon n \\
        &> \ell,
        \end{aligned}
        \nonumber
        \end{equation}
        for sufficiently large $n$. Thus, there exist $\ell$ vertices $u_{t+1,1},u_{t+1,2}, \cdots, u_{t+1,\ell}$ such that the subgraph induced by $ \{u_{1,1}, u_{1,2}, \cdots, u_{1,\ell}\} \cup \{u_{2,1}, u_{2,2}, \cdots, u_{2,\ell}\} \cup \cdots \cup \{u_{t+1,1}, u_{t+1,2}, \cdots, u_{t+1,\ell}\}$ contains a copy of complete $t+1$-partite graph. Furthermore, for each $i \in [2,\ell]$, there exist $\{u_{i,1}, u_{i,2}, \cdots, u_{i,\ell}\} \subseteq V_i \setminus (L \cup W)$ such that the subgraph induced by $ \{u_{1,1}, u_{1,2}, \cdots, u_{1,\ell}\} \cup \{u_{2,1}, u_{2,2}, \cdots, u_{2,\ell}\} \cup \cdots \cup \{u_{r,1}, u_{r,2}, \cdots, u_{r,\ell}\}$ contains a copy of complete $r$-partite graph. Note that $u_{1,1}$ is adjacent to $u_{1,2}$ by assumption. It follows that $G$ contains a copy of $F$, a contradiction. Therefore, $u_{1,1}$ has no neighbors in $V_1 \setminus (W \cup L)$. 
        
        Consider the neighbors of $u_{1,1}$ in $V_{1}$. We have
        $$
        d_{V_1}(u_{1,1}) \le |L| + |W| - 1 < 2 \varepsilon n,
        $$
        which contradicts our assumption that $u_{1,1} \in W$. 
    \end{proof}

    We denote $S_{i}:=V_{i} \setminus L$. 

    \begin{lem}\label{G_4}
        For each $i \in [r]$, $e(G[S_i])=0$.
    \end{lem}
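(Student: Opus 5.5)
We argue by contradiction, reusing the embedding argument from the proof of Lemma \ref{G_3}. Suppose that for some $i$, say $i=1$, there is an edge $u_{1,1}u_{1,2}$ with both ends in $S_1=V_1\setminus L$. Since $W\subseteq L$ by Lemma \ref{G_3}, both endpoints lie in $V_1\setminus(L\cup W)$. Hence each satisfies $d(u_{1,j})>(1-\frac1r-4\varepsilon)n$ and $d_{V_1}(u_{1,j})<2\varepsilon n$. From these bounds we build a copy of $F$ containing this edge, which contradicts the assumption that $G$ is $F$-free.

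First we fix the degree bounds into other parts. By Lemma \ref{G_1}(iii), $|V_t|\le(\frac1r+2\varepsilon)n$ for every $t$. So, exactly as in \eqref{eq::4}, every vertex $v\in V_s\setminus(L\cup W)$ satisfies
$$d_{V_t}(v)\ge \frac nr-(2r+2)\varepsilon n \quad\text{for every } t\ne s.$$
This is the key simplification compared with Lemma \ref{G_3}: both endpoints of the edge are now good vertices, so we never need the weaker bound \eqref{eq::3}. Next we choose $\ell-2$ further vertices $u_{1,3},\dots,u_{1,\ell}$ in $V_1\setminus(L\cup W)$. This is possible because $|V_1\setminus(L\cup W)|\ge(\frac1r-4\varepsilon)n\ge\ell$ by Lemma \ref{G_2}.

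We then build the parts inductively for $t=1,\dots,r-1$. Given sets $\{u_{s,1},\dots,u_{s,\ell}\}\subseteq V_s\setminus(L\cup W)$ for $s\le t$, we apply Lemma \ref{lem::jh} to the $t\ell$ neighbourhoods in $V_{t+1}$ and remove $L\cup W$. This gives
$$\Bigl|\bigcap_{s\le t,\,j\le\ell}N_{V_{t+1}}(u_{s,j})\setminus(L\cup W)\Bigr|\ge t\ell\Bigl(\frac nr-(2r+2)\varepsilon n\Bigr)-(t\ell-1)\Bigl(\frac nr+2\varepsilon n\Bigr)-2\varepsilon n,$$
which is at least $\frac nr-C(r,\ell)\varepsilon n>\ell$ once $\varepsilon$ is small and $n$ is large. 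We pick $\ell$ common neighbours there as $u_{t+1,1},\dots,u_{t+1,\ell}$. After $r-1$ steps, $G$ contains a complete $r$-partite graph $K_{\ell,\dots,\ell}$, together with the extra edge $u_{1,1}u_{1,2}$ inside the first part.

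Finally we embed $F$. Since $F$ is color-critical with $\chi(F)=r+1$, there is an edge $e$ such that $F-e$ is $r$-colourable. Taking an $r$-colouring of $F-e$, the endpoints of $e$ must get the same colour, since otherwise $F$ would be $r$-colourable. Each colour class has at most $\ell=|V(F)|$ vertices, so $F-e$ embeds into $K_{\ell,\dots,\ell}$. We send the colour class containing the ends of $e$ into $\{u_{1,j}\}$, mapping the ends of $e$ to $u_{1,1}$ and $u_{1,2}$. Then $F\subseteq G$, a contradiction, so $e(G[S_i])=0$. The only delicate step is the counting estimate: we must check that the $\varepsilon$-losses stay below $\frac nr$. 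This is routine for $\varepsilon$ sufficiently small relative to $r$ and $\ell$.
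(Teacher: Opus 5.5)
Your proposal is correct and follows essentially the same route as the paper. Both arguments use $W\subseteq L$ to get the bound $d_{V_t}(v)\ge\frac nr-(2r+2)\varepsilon n$ for vertices outside $L$, then build $K_{\ell,\dots,\ell}$ part by part via Lemma \ref{lem::jh}, and finally embed $F$ using the extra edge inside the first part; you are slightly more explicit than the paper about why $S_1$ avoids $W$ and how the color-critical edge of $F$ is mapped onto $u_{1,1}u_{1,2}$.
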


    \begin{proof}
        Suppose to the contrary that there exists some $i$ such that $e(G[S_i]) \ge 1$. Without loss of generality, let $e(G[S_1]) \ge 1$. By Lemma \ref{G_1}(\romannumeral 3) and Lemma \ref{G_2}(\romannumeral 1), for sufficiently large $n$, we have 
        $$
        |S_1| \geq \left( \frac{1}{r} - 2\varepsilon \right)n - \varepsilon n\geq \ell.
        $$
        Thus, there exist $\ell$ vertices $u_{1,1}, u_{1,2}, \cdots, u_{1,\ell} $ in $S_1$ such that the subgraph induced by $\{u_{1,1}, u_{1,2}, \cdots, u_{1,\ell} \}$ contains at least one edge. By using the similar method as in Lemma \ref{G_3}, we have 
        $$
        d_{V_2}(u_{1,i}) \ge \frac{n}{r}-(2r+2)\varepsilon n.
        $$
        Using Lemma \ref{lem::jh}, we get
        \begin{equation}
        \begin{aligned}
        \left|  \bigcap_{\substack{i \in [\ell] }} N_{S_{2}}(u_{1,i}))  \right|
        & = \left|  (\bigcap_{\substack{i \in [\ell] }} N_{V_{2}}(u_{1,i}))  \setminus L ) \right|\\
        &\ge \sum_{i=1}^{\ell} d_{V_2}(u_{1,i}) - (\ell - 1)|V_2| - |L| \\
        &> \ell \left(\frac{1}{r}-(2r+2)\varepsilon \right)n-(\ell - 1)\left(\frac{1}{r}+2\varepsilon \right)n-\varepsilon n\\
        &= \frac{n}{r} - \left( 2r\ell+4\ell - 1\right)\varepsilon n.
        \end{aligned}
        \nonumber
        \end{equation}
        For sufficiently large $n$, it follows that there exist $\ell$ vertices $u_{2,1}, u_{2,2}, \cdots, u_{2,\ell} $ in $S_2$ such that the subgraph induced by $\{u_{1,1}, u_{1,2}, \cdots, u_{1,\ell}\}$ and $\{u_{2,1}, u_{2,2}, \cdots, u_{2,\ell}\}$ contains a complete bipartite graph. By an analogous argument to that in Lemma \ref{G_3}, for each $i \in [2,r]$, there exist $\{u_{i,1}, u_{i,2}, \cdots, u_{i,\ell}\} \subseteq S_i$ such that the subgraph induced by $ \{u_{1,1}, u_{1,2}, \cdots, u_{1,\ell}\} \cup \{u_{2,1}, u_{2,2}, \cdots, u_{2,\ell}\} \cup \cdots \cup \{u_{r,1}, u_{r,2}, \cdots, u_{r,\ell}\}$ contains a copy of complete $r$-partite graph. Recall that the subgraph induced by $\{u_{1,1}, u_{1,2}, \cdots, u_{1,\ell} \}$ contains at least one edge. Thus, $G$ contains a copy of $F$, which is a contradiction. 
    \end{proof}

    The following conclusion can be obtained. As the proof follows the same line of reasoning as those of Lemma \ref{G_3} and Lemma \ref{G_4}, we omit it here.

    \begin{lem}\label{G_5}
        %(i) For each $i \in [r]$, any subset of $S_i$ of size $O(1)$ can be a part of some complete multipartite graph. Moreover, in this complete multipartite graph contains exactly one part of size \(\theta(n)\), with all other parts of size \(O(1)\). 

        (i) For any vertices $u,v\in S$, they have $\Theta(n)$ common neighbors in $S$. (ii) Any edge $e$ in $E(G[S])$ is contained in a complete multipartite graph. Moreover, this complete multipartite graph contains exactly one part of size \(\Theta(n)\), with all other parts of size \(O(1)\). 
    \end{lem}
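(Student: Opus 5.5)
Throughout, I read $S$ as $S=\bigcup_{i=1}^r S_i=V(G)\setminus L$. Both parts rest on one degree estimate, which I will prove first. Let $x\in S_a$ and $j\neq a$. Since $x\notin L$, we have $d(x)>(1-\frac1r-4\varepsilon)n$. By Lemma \ref{G_4}, $S_a$ is independent, so $d_{V_a}(x)\le |L|\le \varepsilon n$ by Lemma \ref{G_2}(i). Bounding the degrees into the remaining $r-2$ parts by Lemma \ref{G_1}(iii), exactly as in \eqref{eq::4}, gives
$$d_{V_j}(x)\ \ge\ d(x)-\varepsilon n-(r-2)\Big(\frac1r+2\varepsilon\Big)n\ \ge\ \frac nr-C\varepsilon n$$
for a constant $C=C(r)$. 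This replaces the hypothesis $x\notin W$ used in Lemma \ref{G_3}; indeed $W\subseteq L$ by Lemma \ref{G_3}, so it is automatic.

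For (i), take $u\in S_a$ and $v\in S_b$, where possibly $a=b$. Since $r\ge 3$, there is an index $c\notin\{a,b\}$. Apply Lemma \ref{lem::jh} inside $V_c$ to $N_{V_c}(u)$ and $N_{V_c}(v)$, and then discard $L$. This gives
$$|N(u)\cap N(v)\cap S_c|\ \ge\ 2\Big(\frac nr-C\varepsilon n\Big)-\Big(\frac nr+2\varepsilon n\Big)-\varepsilon n\ =\ \frac nr-O(\varepsilon)n.$$
For small $\varepsilon$ this is $\Theta(n)$, and the upper bound $n$ is trivial.

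For (ii), let $uv\in E(G[S])$. By Lemma \ref{G_4} the endpoints lie in different parts, say $u\in S_a$ and $v\in S_b$ with $a\neq b$. I would run the greedy construction of Lemma \ref{G_3}. Start with $A_a=\{u\}$ and $A_b=\{v\}$; if desired, enlarge them to $\ell$ vertices using common neighbourhoods in $S_a$ and $S_b$. Then handle the remaining indices $j\notin\{a,b\}$ one at a time, except for the last one. At each such step, the set $T$ of previously chosen vertices has size $O(1)$, and Lemma \ref{lem::jh} together with the degree estimate gives
$$\Big|\bigcap_{x\in T}N(x)\cap S_j\Big|\ \ge\ |T|\Big(\frac nr-C\varepsilon n\Big)-(|T|-1)\Big(\frac nr+2\varepsilon n\Big)-\varepsilon n\ \ge\ \frac nr-O(|T|\varepsilon)n.$$
From this common neighbourhood we pick $\ell$ vertices as the next part. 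For the last index, instead of picking $\ell$ vertices, we keep the entire common neighbourhood in $S_j$, which has size $\Theta(n)$ by the same bound. All the chosen parts lie in distinct $S_j$'s and are pairwise completely joined, so they span a complete multipartite graph containing the edge $uv$ (joining $A_a$ and $A_b$). This graph has exactly one part of size $\Theta(n)$, and all other parts have size $O(1)$.

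The main obstacle is the bookkeeping of constants. One must check that after $O(\ell r)$ applications of Lemma \ref{lem::jh} the lower bound $\frac nr-O(\ell r)\varepsilon n$ remains positive and linear in $n$. This holds once $\varepsilon$ is chosen small in terms of $r$ and $\ell$, and that choice fixes the hidden constant in $\Theta(n)$.
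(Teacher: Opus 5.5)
Your proposal is correct and is essentially the argument the paper intends; the paper omits the proof and points to Lemmas \ref{G_3} and \ref{G_4}. Both prove a linear lower bound on $d_{V_j}(x)$ for $x\in S$, then use Lemma \ref{lem::jh} (together with $r\ge 3$) to get $\Theta(n)$ common neighbours in a third part, and repeat this greedily, keeping the last common neighbourhood whole. Bounding $d_{V_a}(x)\le |L|$ via the independence of $S_a$ rather than via $x\notin W$ is an immaterial variation, since $W\subseteq L$.
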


    Next, we show that $\dot{G} [S]$ is balanced. To do this, we first prove a weaker result. 

    \begin{lem}\label{S_2}
        $\dot{G} [S]$ is $C_{3}^{-}$-free. 
    \end{lem}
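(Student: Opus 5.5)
Here $S=\bigcup_{i=1}^r S_i=V(G)\setminus L$. By Lemma \ref{G_4} each $S_i$ is independent, so every triangle of $G[S]$ has one vertex in each of three distinct parts, say $S_1,S_2,S_3$. The plan is to argue by contradiction. Suppose $\dot{G}[S]$ contains a negative triangle $u_{1,1}u_{2,1}u_{3,1}$ with $u_{i,1}\in S_i$. We will embed this triangle into a complete tripartite subgraph of $G[S]$ all of whose parts have size at least $2k$, and then invoke Lemma \ref{S_1}.

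\textbf{Step 1 (building the tripartite graph).} Every vertex of $S$ lies outside $L$ and, by Lemma \ref{G_3}, outside $W$. Hence each $v\in S_i$ satisfies $d_{V_i}(v)<2\varepsilon n$ and, exactly as in \eqref{eq::4}, $d_{V_j}(v)\ge \frac nr-(2r+2)\varepsilon n$ for every $j\neq i$.
\begin{enumerate}[(a)]
\item Apply Lemma \ref{lem::jh} to $u_{1,1},u_{2,1}$ inside $V_3$ and remove $L$. This gives $\Theta(n)$ common neighbours in $S_3$; choose $2k-1$ of them, $u_{3,2},\dots,u_{3,2k}$.
\item Apply Lemma \ref{lem::jh} to the at most $1+2k$ vertices $u_{1,1},u_{3,1},\dots,u_{3,2k}$ inside $V_2$. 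Choose new vertices $u_{2,2},\dots,u_{2,2k}\in S_2$ adjacent to all of them.
\item Apply it once more to all chosen vertices of $S_2\cup S_3$ inside $V_1$, obtaining $u_{1,2},\dots,u_{1,2k}\in S_1$.
\end{enumerate}
At each stage the intersection bound is $\frac nr-O(k\varepsilon n)$, which exceeds $2k$ for small $\varepsilon$ and large $n$. This is the same computation as in Lemmas \ref{G_3} and \ref{G_4}; Lemma \ref{G_5}(i) is its informal form. The vertices chosen at a stage are adjacent to every previously chosen vertex in the other two parts. Since $u_{1,1}u_{2,1}u_{3,1}$ is a triangle, the $6k$ vertices together span a copy of $K_{2k,2k,2k}$ in $G[S]$. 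The bookkeeping must check that the original triangle edges and all cross pairs are indeed covered.

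\textbf{Step 2 (applying Lemma \ref{S_1}).} The signed subgraph $\Gamma$ of $\dot{G}$ on this $K_{2k,2k,2k}$ is $C_{2k+1}^{-}$-free, because $\dot{G}$ is. Lemma \ref{S_1} with $m=3$ and parts of size $2k$ then says $\Gamma$ is balanced. But $\Gamma$ contains the negative triangle $u_{1,1}u_{2,1}u_{3,1}$, a contradiction. Hence $\dot{G}[S]$ is $C_3^{-}$-free.

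\textbf{Main obstacle.} The only nontrivial point is Step 1. We must make sure the greedy common-neighbourhood argument works starting from three vertices that are already pairwise adjacent. It also has to run entirely inside $S$, with enough room left at each stage after excluding $L$ and the vertices already chosen. The degree bounds from the proof of Lemma \ref{G_3} give exactly this, since the number of vertices involved is bounded in terms of $k$ alone. Note that $F$-freeness is not needed here; the contradiction comes purely from Lemma \ref{S_1}.
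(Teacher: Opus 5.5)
Your proof is correct and follows essentially the paper's approach: embed the negative triangle's configuration into a large complete tripartite subgraph of $G[S]$ using the common-neighbourhood counting behind Lemmas \ref{G_3}--\ref{G_5}, then invoke Lemma \ref{S_1}. The only difference is in how $w$ is handled. The paper places just the edge $uv$ into such a $K$ via Lemma \ref{G_5}(ii); when $w\notin V(K)$, it closes the negative path $uwv$ with a positive $(2k-1)$-path inside the switched, all-positive $K$. Your explicit three-stage construction instead forces all three triangle vertices into the $K_{2k,2k,2k}$, so Lemma \ref{S_1} alone gives the contradiction.
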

    \begin{proof}
        Suppose to the contrary that $\dot{G} [S]$ contains a negative triangle $C_{3}^{-}$. Let $V(C_{3}^{-}) = \{ u,v,w \}$. Applying Lemma \ref{G_5}(ii), $uv$ is contained in a complete $3$-partite graph $K$ with each part of size at least $2k$. Note that $\dot{G} [S]$ is $C_{2k+1}^{-}$-free. Thus, we have $w \notin V(K)$; otherwise this contradicts Lemma \ref{S_1}. By Lemma \ref{Z82} and \ref{S_1}, up to switching equivalence, we may assume $(K,\sigma)$ is all positive. Since $uwv$ forms a negative path of length $2$ in $\dot{G} [S]$, $(K \cup \{w \},\sigma)$ contains a negative cycle of length $2k+1$ in $\dot{G} [S]$, which is a contradiction.
    \end{proof}

    \begin{lem}\label{S_3}
        $\dot{G} [S]$ is balanced. 
    \end{lem}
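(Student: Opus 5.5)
We argue by contradiction: suppose $\dot{G}[S]$ is unbalanced. It then contains a negative cycle, and we take a shortest one, $\dot{C}=x_1x_2\cdots x_mx_1$, inside $\dot{G}[S]$. Lemma \ref{S_2} gives $m\ge 4$, and $\dot{C}$ need not be induced. We will shorten $\dot{C}$ using the many common neighbours supplied by Lemma \ref{G_5}(i), which contradicts minimality. Since $\dot{G}[S]$ is $C_3^{-}$-free, the natural tool is to replace a path of length $2$ by a different path of length $2$ with the same sign.

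\textbf{Key step (chords and common neighbours).} For consecutive vertices $x_{i-1},x_i,x_{i+1}$ on $\dot{C}$, Lemma \ref{G_5}(i) gives $\Theta(n)$ common neighbours of $x_{i-1}$ and $x_{i+1}$ in $S$. Take such a common neighbour $y\notin V(\dot{C})$; since $m$ will turn out to be bounded, there is plenty of room. We will show that the two paths $x_{i-1}yx_{i+1}$ and $x_{i-1}x_ix_{i+1}$ have the same sign, so that $x_i$ can be exchanged for $y$. The tool for this: $x_i$ and $y$ also have many common neighbours. Take $z$ adjacent to $x_{i-1},x_i,x_{i+1},y$; such a $z$ exists by applying Lemma \ref{lem::jh} inside the multipartite structure, since all these vertices have degree about $(1-1/r)n$ and lie in $S$. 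Because every triangle in $\dot{G}[S]$ is positive, each triangle $zx_{i-1}x_i$, $zx_ix_{i+1}$, and so on is positive. Hence
\[
\sigma(x_{i-1}x_i)\sigma(x_ix_{i+1})=\sigma(x_{i-1}z)\sigma(zx_{i+1})=\sigma(x_{i-1}y)\sigma(yx_{i+1}).
\]

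\textbf{Shortening the cycle.} Using the same idea, take distinct vertices $x_1,x_3$ on $\dot{C}$ (with $m\ge4$) and a vertex $z\in S$ adjacent to both. The cycle $\dot{C}$ splits into two cycles, $x_1x_2x_3zx_1$ and $x_3x_4\cdots x_mx_1zx_3$. The product of their signs equals $\sigma(\dot{C})=-1$, so one of them is negative. The first has length $4$, and the second has length $m$. This only shortens when $m\ge5$. So the better choice is to split with a triangle: $x_1x_2z$ together with $z x_2\cdots$. A cleaner approach uses a vertex $z$ adjacent to all of $x_1,\dots,x_m$. Then every triangle $zx_ix_{i+1}$ is positive, and multiplying these $m$ triangle signs shows $\sigma(\dot{C})=+1$, a contradiction. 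The existence of such a $z$ requires $m=O(1)$, because Lemma \ref{lem::jh} gives $|\bigcap N_S(x_i)|\ge n/r-O(m\varepsilon n)>0$ as long as no two $x_i$ lie in the same part, and otherwise it gives a common neighbour in some other part.

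\textbf{Main obstacle: bounding $m$.} The difficulty is that a shortest negative cycle may a priori be long. To handle this, first use Lemma \ref{G_5}(i) to replace each long stretch by short ones. The plan is to show that the shortest negative cycle has length at most $2r+2$, say. Take any three vertices $x_1,x_j,x_{2j}$ spaced along $\dot{C}$ and common neighbours $z$ of pairs of them. These chords split $\dot{C}$ into cycles of lengths roughly $j+2$, one of which must be negative. This halving shows $m$ is bounded by an absolute constant; with $m\le 6$, say, the common-neighbour argument applies. We expect the delicate part to be ensuring that the chosen common neighbours avoid $V(\dot{C})$ and that Lemma \ref{lem::jh} yields a common neighbour of a bounded set lying in $S$. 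The latter is where we need the sets to meet at least one part $V_j$ only in neighbours of all of them, which follows from the degree bounds $d_{V_j}(v)\ge n/r-(2r+2)\varepsilon n$ for $v\in S$ established in Lemma \ref{G_3}.
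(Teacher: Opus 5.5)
\textbf{There is a genuine gap in the step meant to finish the proof.} Bounding the length of a shortest negative cycle is essentially fine: a single path $x_1zx_j$ through a common neighbour of two roughly antipodal vertices of $\dot{C}$ already forces $m\le 5$. The failure comes after that. By Lemma \ref{G_4} every $S_j$ is independent, so a vertex $z\in S$ can be adjacent to all of $x_1,\dots,x_m$ only if $V(\dot{C})$ misses some part $S_j$. The obstruction is therefore not two $x_i$ sharing a part, which is harmless. It is $V(\dot{C})$ meeting all $r$ parts. You also cannot take $z\in L$, since Lemma \ref{S_2} controls only triangles of $\dot{G}[S]$.

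This case really occurs. For $r=3$, \emph{every} odd cycle of $G[S]$ meets $S_1,S_2,S_3$. A shortest negative cycle of length $5$ is not excluded by your splitting: a path through a common neighbour of two of its vertices yields cycles of lengths $4$ and $5$. Yet no vertex of $S$ is adjacent to all five of its vertices. The same obstruction breaks your ``key step''. If $x_{i-1},x_i,x_{i+1}$ lie in three different parts and $r=3$, no $z\in S$ is adjacent to all three. So the sign of $x_{i-1}x_ix_{i+1}$ cannot be transferred to $x_{i-1}yx_{i+1}$ this way. Your stated criterion (``as long as no two $x_i$ lie in the same part'') is simply the wrong condition.

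\textbf{What is missing is a rerouting step for cycles that meet every part.} Here is one way to do it.

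\emph{Reducing to length $4$.} Fix a part $S_j$ meeting $\dot{C}$. Take $z\in S_j\setminus V(\dot{C})$ adjacent to every vertex of $V(\dot{C})\setminus S_j$; this bounded set misses $S_j$, so Lemma \ref{lem::jh} and the degree bounds supply $z$. Every edge of $\dot{C}$ avoiding $S_j$ spans a positive triangle with $z$. Counting how often each $\sigma(zx_a)$ occurs then gives
\[
\sigma(\dot{C})=\prod_{x_i\in V(\dot{C})\cap S_j}\sigma(zx_{i-1}x_ix_{i+1}z).
\]
Hence some $4$-cycle is negative, which contradicts minimality when $m\ge 5$.

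\emph{Handling length $4$.} Suppose $m=4$ and $\dot{C}$ meets all parts. Then two opposite vertices, say $x_2,x_4$, are each alone in their parts. Replace $x_4$ by a vertex $z\neq x_2$ in the part of $x_2$ that is adjacent to $x_1,x_3,x_4$. The two positive triangles $zx_3x_4$ and $zx_4x_1$ show the sign is unchanged. The new $4$-cycle $x_1x_2x_3z$ misses the part of $x_4$, and only now does your common-neighbour argument apply.

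\textbf{How the paper avoids this.} It builds a complete $r$-partite core $\bigcup_i U_i$ with $|U_i|=2k$ for $i<r$ and switches it to all positive via Lemma \ref{S_1}. It then uses the large set $U_r$ as a hub of common neighbours to propagate positivity, invoking Lemma \ref{S_2} only for triangles through $U_r$.
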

    
    \begin{proof}
        Since $|S_{1}| = |V_1 \setminus L| \ge \left( \frac{1}{r} - 3\varepsilon \right)n$, for sufficiently large $n$, we may choose $2k$ vertices $u_{1,1},u_{1,2}, \cdots, u_{1,2k}$ from $S_{1}$. Let $U_1=u_{1,1},u_{1,2}, \cdots, u_{1,2k}$. By using the similar method as in Lemma \ref{G_3}, for each $i \in [2k]$, we have 
        $$
        d_{V_2}(u_{1,i}) \ge \frac{n}{r}-(2r+2)\varepsilon n.
        $$
        Consider the common neighbors of vertices of $U_1$ in $S_2$. By Lemma \ref{lem::jh}, we have 
        \begin{equation}
        \begin{aligned}
        \left|  (\bigcap_{\substack{i \in [2k] }} N_{S_{2}}(u_{1,i})) \right|
        &=\left|  (\bigcap_{\substack{i \in [2k] }} N_{V_{2}}(u_{1,i}))  \setminus L ) \right|\\ 
        &\geq \sum_{i=1}^{2k} d_{V_2}(u_{1,i}) - (2k - 1)|V_2| - |L| \\
        &>2k\left(\frac{n}{r}-(2r+2)\varepsilon n\right)-(2k - 1)\left(\frac{n}{r}+2\varepsilon n\right)-\varepsilon n\\
        &= \frac{n}{r} - \left( 4kr+8k - 1\right)\varepsilon n\\
        &> 2k,
        \end{aligned}
        \nonumber
        \end{equation}
        for sufficiently large $n$. It follows that there exist $2k$ vertices $u_{2,1}, u_{2,2}, \cdots, u_{2,2k} $ in $S_2$ that are the common neighbors of $u_{1,1}, u_{1,2}, \cdots, u_{1,2k} $. By an analogous argument to that in Lemma \ref{G_3}, for each $i \in [2,r-1]$, there exist $u_{i,1}, u_{i,2}, \cdots, u_{i,\ell}$ in $ S_i $ that are the common neighbors of $u_{1,1}, \cdots, u_{1,2k}, \cdots, u_{i-1,1}, \cdots, u_{i-1,2k}$. Let $U_i = \{ u_{i,1},u_{i,2},\cdots,u_{i,2k} \}$. Next we consider the common neighbors of vertices of $\cup_{i=1}^{r-1}U_{i}$ in $S_r$. Then we get 
        \begin{equation}
        \begin{aligned}
        \left|  \bigcap_{\substack{i \in [r-1] , \\ j \in [2k]}} N_{S_{r}}(u_{i,j})\right|&=\left|  (\bigcap_{\substack{i \in [r-1] , \\ j \in [2k]}} N_{V_{r}}(u_{i,j}))  \setminus L ) \right| \\
        &\ge  2k(r-1)\left(\frac{n}{r}-(2r+2)\varepsilon n\right)\\
        &~~~~-(2k(r-1)-1)\left(\frac{n}{r}+2\varepsilon n\right) - \varepsilon n\\
        &= \frac{n}{r}-\left( 4k(r-1)(r+2)-1) \right)\varepsilon n.
        \end{aligned}
        \nonumber
        \end{equation}
        
        Let $U_r = \bigcap_{\substack{i \in [r-1], j \in [2k]}} N_{S_{r}}(u_{i,j})$. Note that $\dot{G}$ is $C_{2k+1}^{-}$-free, by Lemma \ref{S_1} and Lemma \ref{G_4}, the signed subgraph induced by $\cup_{i=1}^{r}U_{i}$ is a balanced complete $r$-partite graph. Up to switching equivalence, we may assume $\dot{G}[\cup_{i=1}^{r}U_{i}]$ is all positive. For each $i \in [r]$, let $\overline{U}_{i} = S_{i} \setminus U_{i}$.
    \begin{claim}\label{S_3_1}
        For any integer $i$ with $1\le i \le r-1$ and any $v \in \overline{U}_{i}$, all edges in $E(v,U_r)$ have the same sign. 
    \end{claim}
    \begin{proof}
        For each $v \in \overline{U}_{i}$, we have
        \begin{equation}
        \begin{aligned}
        |N_{U_r}(v)|=d_{U_r}(v) &\geq d_{S_r}(v) - (|S_r| - |U_r|) \\
        &\geq (\frac{1}{r} - (2r+3)\varepsilon) n - (\frac{1}{r}+2\varepsilon)n+ \frac{n}{r}-\left( 4k(r-1)(r+2)-1) \right)\varepsilon n\\
        &= \frac{n}{r} - (4kr-4k+2)(r+2)\varepsilon n\\
        & \ge 2.
        \end{aligned}
        \nonumber
        \end{equation}
        If there exist two edges with opposite signs in $E(v,U_r)$, then we can find a negative cycle of length $2k+1$ in $\dot{G}[\cup_{i=1}^{r}U_{i} \cup \{ v \}]$. This is a contradiction.
    \end{proof}
    
    Let $\dot{G}'$ be a signed subgraph of $\dot{G}$ obtained by adding the vertex set $\cup_{i=i}^{r-1}\overline{U}_{i}$ and the edge set $E_{\dot{G}}(\cup_{i=i}^{r-1}\overline{U}_{i},U_r)$ to $\dot{G}[\cup_{i=1}^{r}U_{i}]$. Note that $\dot{G}[\cup_{i=1}^{r}U_{i}]$ is all positive. By Claim \ref{S_3_1}, $\dot{G}'$ contains no negative cycles. Thus, $\dot{G}'$ is balanced. Up to switching equivalence, we may assume that $\dot{G}'$ is all positive.   

    \begin{claim}\label{S_3_3}
        $\dot{G} [S \setminus \overline{U}_{r}]$ is balanced. 
    \end{claim}

    \begin{proof}
        If $\dot{G} [S \setminus \overline{U}_{r}]$ is unbalanced, then there exists a negative edge $uv$. Note that $\dot{G}'$ is all positive, which implies that $uv \in \cup_{1\le i <j \le r-1} E(\overline{U}_{i},\overline{U}_{j})$. Consider the common neighbors of vertices of $u$ and $v$ in $S_{r}$, by the similar method as Lemma \ref{G_3}, we have
        \begin{equation}
        \begin{aligned}
        \left| N_{S_r}(u) \cap N_{S_r}(v) \right| 
        &\geq d_{V_r}(u) + d_{V_r}(v) - |V_2| - |L| \\
        &> 2 \left(\frac{1}{r}-(2r+2)\varepsilon \right)n-\left(\frac{1}{r}+2\varepsilon \right)n-\varepsilon n\\
        &= \frac{n}{r} - \left( 4r+7 \right)\varepsilon n.
        \end{aligned}
        \nonumber
        \end{equation}
        Recall that $|U_r| \ge \frac{n}{r}-\left( 4k(r-1)(r+2)-1) \right)\varepsilon n$ and $S_r \le (\frac{1}{r}+2\varepsilon) n$, we get  
        $$
        | N_{S_r}(u) \cap N_{S_r}(v) | + |U_r| >|S_r|.
        $$
        According to the pigeonhole principle, $N_{U_r}(u) \cap N_{U_r}(v)=N_{S_r}(u) \cap N_{S_r}(v) \cap U_r \neq \emptyset$. Let $w \in N_{U_r}(u) \cap N_{U_r}(v)$. Since $\dot{G}'$ is all positive, we have $\sigma(vw) = \sigma(uw)=+1$. Thus, $uvw$ forms a negative triangle in $\dot{G}[S]$, which contradicts Lemma \ref{S_2}. 
    \end{proof}

    Now, we show that $\dot{G}[S]$ is balanced. By Claim \ref{S_3_3}, up to switching equivalence, we may assume that $\dot{G} [S \setminus \overline{U}_{r}]$ is all positive. For any vertex $v \in \overline{U}_{r}$, if there exist two edges with opposite signs in $E(v, S \setminus \overline{U}_{r})$, then by Lemma \ref{G_5}(i), we can find a negative cycle of length $2k+1$ in the subgraph $\dot{G}\left[(S \setminus \overline{U}_{r}) \cup \{ v \}\right]$. It follows that for any vertex $v \in \overline{U}_{r}$, all edges in $E(v, S \setminus \overline{U}_{r})$ have the same sign. Therefore, $\dot{G}[S]$ is balanced. The proof is completed. 
    \end{proof}

    \begin{lem}\label{S_4}
        We denote $L':=\left\{ v \in V(G) \mid d(v) \leq \left(1 - \frac{1}{r} - 7\varepsilon\right) n \right\}$, $S_{i}' := V_{i} \setminus L'$, and $S' := \cup_{i=1}^{r} S_{i}'$. The following statements hold:
         \begin{enumerate}[(i)]
            \item for any negative cycle $C^{-}$ in $\dot{G}$, $L' \setminus V(C^{-}) = \emptyset$;
            \item $e(S_{i}')=0$;
            \item $\dot{G}[S']$ is balanced.
        \end{enumerate}
    \end{lem}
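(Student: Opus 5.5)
The plan is to prove (ii) and (iii) first and then use them to derive (i) through an extremality (vertex-replacement) argument. Since $7\varepsilon>4\varepsilon$, we have $L'\subseteq L$, hence $S_i\subseteq S_i'$ and $S\subseteq S'$.

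\emph{Part (ii).} I will repeat the proof of Lemma \ref{G_4} with $S_i'$ in place of $S_i$. Every vertex $u\in S_i'$ satisfies $d(u)>(1-\frac1r-7\varepsilon)n$. The computation in \eqref{eq::4} then gives $d_{V_j}(u)\ge \frac nr - c\varepsilon n$ for a constant $c=c(r)$, provided $d_{V_i}(u)$ is small. For this I use Lemma \ref{G_3}: every vertex outside $L$ is outside $W$. For the few vertices of $L\setminus L'$, I bound $d_{V_i}(u)\le d(u)/r$ by the maximality of the partition, exactly as in \eqref{eq::3}. Lemma \ref{lem::jh} then still produces $\Theta(n)$ common neighbours in each $S_j$ for any $O(1)$ vertices. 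So an edge inside $S_i'$ extends to a complete $r$-partite graph with parts of size $\ell$ plus that edge. This contains $F$ because $F$ is color-critical, giving a contradiction.

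\emph{Part (iii).} By Lemma \ref{S_3}, $\dot G[S]$ is balanced, and I switch so that it is all positive. Take $v\in S'\setminus S$. By the degree estimates above, $v$ has $\Theta(n)$ neighbours in $S_j$ for every $j$ other than its own part. Suppose two edges from $v$ to $S$ have opposite signs. Lemma \ref{G_5}(i), together with the common-neighbourhood counts, lets me close a path of length $2k-1$ inside the positive complete multipartite structure of $S$. This gives a negative $C_{2k+1}$, so all edges from $v$ to $S$ have one sign. I then switch at $v$ to make them positive.

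It remains to handle an edge $uv$ with $u,v\in S'\setminus S$. These vertices lie in different parts by (ii), and they have a common neighbour $w\in S$ by Lemma \ref{lem::jh}. If $\sigma(uv)=-1$, then $uvw$ is a negative triangle. Extending one of its edges through the large complete multipartite structure, as in Lemma \ref{S_2}, yields a negative $C_{2k+1}$. Hence $\dot G[S']$ is all positive, i.e.\ balanced. I expect this step to be the main obstacle: the degree bound in $L\setminus L'$ is weaker, so one must check that the neighbourhoods into $S$ are still large enough for the path-closing and pigeonhole arguments used in Lemma \ref{S_3}.

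\emph{Part (i).} Suppose $C^-$ is a negative cycle and $v\in L'\setminus V(C^-)$. Work with the switching from (iii), so that $\dot G[S']$ is all positive. Delete all edges at $v$. Then join $v$ by positive edges to every vertex of $S_2'\cup\cdots\cup S_r'$ (say $v$ is moved into part 1), excluding the vertices of $C^-$ if necessary; that exclusion costs only $O(1)$ edges.

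\begin{enumerate}[(a)]
\item \emph{Edge count.} The new degree is at least $(1-\frac1r-3\varepsilon)n-O(1)$. The old degree was at most $(1-\frac1r-7\varepsilon)n$. So the edge count strictly increases.
\item \emph{Unbalanced.} $C^-$ avoids $v$ and survives, so the new signed graph is still unbalanced.
\item \emph{$F$-free.} If a copy of $F$ used $v$, its neighbours of $v$ lie in $S_2'\cup\cdots\cup S_r'$. Lemma \ref{lem::jh} gives a vertex $w\in S_1$ outside this copy that is adjacent to all of them. Replacing $v$ by $w$ gives a copy of $F$ in $G$, a contradiction.
\item \emph{$C^-_{2k+1}$-free.} A new negative $(2k+1)$-cycle must pass through $v$ via two neighbours $x,y\in S'$. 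Choose $w\in S_1$ off the cycle with $x,y\in N(w)$; this is possible since there are $\Theta(n)$ choices. Both edges $wx,wy$ are positive, as are $vx,vy$. So replacing $v$ by $w$ gives a negative $C_{2k+1}$ in $\dot G$, a contradiction.
\end{enumerate}

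These four points contradict the maximality of $e(\dot G)$, which proves (i).
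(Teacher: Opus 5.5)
Your plan breaks at the point you flagged yourself: the degree bound for vertices of $L\setminus L'$. With the estimate you use, the sentence ``Lemma \ref{lem::jh} then still produces $\Theta(n)$ common neighbours in each $S_j$ for any $O(1)$ vertices'' is false. For $u\in V_i\cap(L\setminus L')$, the bound $d_{V_i}(u)\le d(u)/r$ only gives
\[
d_{V_j}(u)\ge\Bigl(1-\frac1r\Bigr)\Bigl(1-\frac1r-7\varepsilon\Bigr)n-(r-2)\Bigl(\frac1r+2\varepsilon\Bigr)n=\frac{n}{r^2}-O(\varepsilon n).
\]
So for two such vertices $x,y$, Lemma \ref{lem::jh} yields only $|N_{V_j}(x)\cap N_{V_j}(y)|\ge \frac{2n}{r^2}-\frac nr-O(\varepsilon n)$, which is negative for $r\ge3$. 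Using the partition inequality $d_{V_i}(u)\le d_{V_j}(u)$ directly only raises $n/r^2$ to about $n/(2r)$, which is still not enough. This causes three failures:
\begin{enumerate}[(a)]
\item Part (ii) is unproved when the edge inside $S_i'$ has both endpoints in $L\setminus L'$.
\item Steps (c) and (d) of your part (i) fail. You join $v$ to all of $S_2'\cup\cdots\cup S_r'$, so two of its neighbours on the copy of $F$ or on the cycle may lie in $L\setminus L'$, and then a common neighbour $w\in S_1$ is not guaranteed.
\item In (iii), the common neighbour $w\in S$ of $u,v\in S'\setminus S$ is unjustified. This one can be avoided: join a neighbour of $u$ in $S$ to a different neighbour of $v$ in $S$ by a positive path of length $2k-2$ inside $S$.
\end{enumerate}

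The missing step is $W\subseteq L'$, proved by rerunning Lemma \ref{G_3} with $L'$ in place of $L$. The weak bound suffices there because a vertex $u\in V_i\cap(W\setminus L')$ is the only weak vertex in that configuration. The other $\ell-1$ vertices of its part, and all vertices chosen in the other parts, are taken outside $L$, hence outside $W$ by Lemma \ref{G_3}. So $u$ has no neighbour in $V_i\setminus L$, giving $d_{V_i}(u)\le|L|\le\varepsilon n<2\varepsilon n$, a contradiction. Once $W\subseteq L'$, every $u\in S_i'$ has $d_{V_i}(u)<2\varepsilon n$, hence $d_{V_j}(u)\ge\frac nr-(2r+5)\varepsilon n$ for all $j\neq i$. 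Then all your common-neighbourhood arguments work for any bounded set of vertices of $S'$, and the rest of your plan goes through, including your reordering that uses (iii) in the replacement step for (i). This is exactly the step the paper inserts (``Extending the reasoning from Lemma \ref{G_3}, we find $W\subseteq L'$'') before deducing (ii) and (iii).

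For (i), the paper sidesteps the issue entirely. It proves (i) first and joins $v$ only to vertices of $S$, namely $\bigcup_{i\ge2}N_{S_i}(u)$ for a fixed $u\in S_1$, so the replacement argument needs only Lemma \ref{S_3} and the strong bound on $S$. You could do the same by joining $v$ to $S_2\cup\cdots\cup S_r$, which still has at least $n-|V_1|-|L|\ge(1-\frac1r-3\varepsilon)n$ vertices.
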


    \begin{proof}

    Let  $C^-$  be an arbitrary negative cycle. We make the following claim.
        \begin{claim}\label{S_4_1}
            For each $v \in L \setminus V(C^{-})$, $d(v) > (1-\frac{1}{r}-7\varepsilon)n$.
        \end{claim}
        \begin{proof}
            Suppose to the contrary that there exists a vertex $v \in L \setminus V(C^{-})$ with $d(v) \le (1-\frac{1}{r}-7\varepsilon)n$. Let $u$ be a vertex in $S_1$. By Lemma \ref{S_3}, up to switching equivalence, we may assume that $\dot{G}[S]$ is all positive. Let $G'$ be the graph with $V(G') = V(G)$ and edge set $E(G') = E(G \setminus \{v\}) \cup \{vw\mid w \in \cup_{i=2}^{r}N_{S_i}(u)\}$. Let $\dot{G}' = (G',\sigma')$ be the signed graph satisfying
        \begin{equation}
			\sigma'(e)=\left\{
			\begin{array}{ll}
				\sigma(e),&\mbox{if $e\in E(G \setminus \{v\})$ },\\
				+1,&\mbox{if $e \in \{vw\mid w \in \cup_{i=2}^{r}N_{S_i}(u)\}$.}
			\end{array}
			\right.
			\nonumber
		\end{equation}
        
        We first claim that $G'$ is $F$-free. Assume $G'$ contains a copy of $F$, denoted as $F'$. Hence,  $v \in V(F')$. Let $N_{G'}(v) \cap V(F') = \{w_1, \cdots, w_t\}$. Clearly, for each \(i\in [t]\), \(w_i\notin S_1\). Thus, we have 
        \begin{equation}
            \begin{aligned}
            d_{V_1}(w_{i}) &\geq d(w_{i}) - d_{V \setminus V_1}(w_{i})\\
            &>\left(1 - \frac{1}{r} - 4\varepsilon\right) n - (r - 2)\left( \frac{1}{r} + 2\varepsilon \right)n \\
            &= \frac{n}{r} - 2r\varepsilon n.
            \end{aligned}
            \nonumber
        \end{equation}
        Now, we consider the common neighbors of these vertices in $S_{1}$. By Lemma \ref{lem::jh}, we have
        \begin{equation}
        \begin{aligned}
        \left|  (\bigcap_{\substack{i \in [t] }} N_{S_{1}}(w_{i})) \right|
        &=\left|  (\bigcap_{\substack{i \in [t] }} N_{V_{1}}(w_{i}))  \setminus L ) \right|\\ 
        &\geq \sum_{i=1}^{t} d_{V_1}(w_{i}) - (t - 1)|V_1| - |L| \\
        &>t\left(\frac{n}{r}-2r\varepsilon n\right)-(t - 1)\left(\frac{n}{r}+2\varepsilon n\right)-\varepsilon n\\
        &= \frac{n}{r} - \left( 2tr+2t - 1\right)\varepsilon n.
        \end{aligned}
        \nonumber
        \end{equation}
        For sufficiently large $n$, it follows that there exist a vertex $v' \in S_1$ such that $v'$ is a common neighbor of $w_1, w_2, \cdots, w_t$. Then $(F'\setminus \{v\}) \cup \{ v' \}$ forms a copy of $F$ in $G$, which is a contradiction.

        Next, we claim that $\dot{G}'$ is a $C_{2k+1}^{-}$-free unbalanced signed graph. Obviously, $\dot{G}'$ is unbalanced. Suppose to the contrary that $\dot{G}'$ contains $C_{2k+1}^{-}$ as a signed subgraph. Hence, $v \in V(C_{2k+1}^{-})$. Let $N(v) \cap V(C_{2k+1}^{-}) = \{ w_1, w_2 \}$. By using the similar method as above, we conclude that there exists a common neighbor $v'$ of $w_1$ and $w_2$ in $S_1$ such that $v'\notin V(C_{2k+1}^{-})$. Note that $\sigma (v'w_1)=\sigma (vw_1)=+1$ and $\sigma (v'w_2)=\sigma (vw_2)=+1$. It follows that $(C_{2k+1}^{-} \setminus \{v \}) \cup \{v' \}$ forms a negative cycle of length $2k+1$ in $\dot{G}$, which is a contradiction. 

        Let $L_i = V_{i} \cap L$. Recall that $u \in S_1 \subseteq V_1$, so we have
        \begin{equation}
        \begin{aligned}
        d(u)
        &= d_{V_1}(u) + \sum_{i \in [2,r]}d_{V_i}(u)\\ 
        &= d_{V_1}(u)+\sum_{i \in [2,r]}(d_{S_i}(u)+d_{L_i}(u))\\
        &\le d_{V_1}(u) +\sum_{i \in [2,r]}d_{S_i}(u) + \sum_{i \in [2,r]}|L_i|\\
        &\le \varepsilon n + \sum_{i \in [2,r]}d_{S_i}(u)+\varepsilon n,
        \end{aligned}
        \nonumber
        \end{equation}
        and 
        \begin{equation}
        \sum_{i \in [2,r]}d_{S_i}(u) \ge d(u)-2\varepsilon n\ge (1-\frac{1}{r}-6\varepsilon)n.
        \nonumber
        \end{equation}
        Therefore, 
        $$
        e(\dot{G}')-e(\dot{G}) \ge (1-\frac{1}{r}-6\varepsilon)n - (1-\frac{1}{r}-7\varepsilon)n >0.
        $$
        We get a contradiction. 
        \end{proof}
        Clearly, every vertex $v \in L'$ satisfies $v \in V(C^{-})$; otherwise, by Claim \ref{S_4_1}, $d(v) > \left(1 - \frac{1}{r} - 7\varepsilon\right)n$. Hence, $L' \setminus V(C^{-}) = \emptyset$. Extending the reasoning from Lemma \ref{G_3}, we find $W \subseteq L'$. Applying an analogous argument from Lemma \ref{G_4}, we give $e(S_{i}')=0$. Similar to Lemmas \ref{G_5}--\ref{S_3}, we conclude that $\dot{G}[S']$ is balanced.
    \end{proof}

    \noindent\textbf{The proof of Theorem \ref{thm::main}.}  Let $L'=\{ v_1,v_2,\cdots, v_{p} \}$. By Lemma \ref{S_4}(iii), up to switching equivalence, we may assume that $\dot{G} [S']$ is all positive. We consider two cases to prove Theorem \ref{thm::main}.
    
    {\flushleft {\it Case A.} There exists a negative cycle $C^{-}$ such that $V(C^{-}) \cap S' \neq \emptyset$.}

    We first state the following claim.

    \begin{claim}\label{main_1}
        For any negative cycle $C^{-}=(C,\sigma)$ satisfying $V(C^{-}) \cap S' \neq \emptyset$, $C\setminus S'$ is a path of length $|L'|-1$.
    \end{claim}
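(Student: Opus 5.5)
Fix a negative cycle $C^{-}$ with $V(C^{-})\cap S'\neq\emptyset$. By Lemma \ref{S_4}(i), every vertex of $L'$ lies on $C^{-}$, so $L'\subseteq V(C)$ and $C\setminus S'$ is the subgraph of $C$ induced by $L'$. This subgraph is a disjoint union of paths, since $C$ meets $S'$ and so $V(C)\neq L'$. The claim then reduces to showing that it is a \emph{single} path. Such a path has $|L'|$ vertices and hence length $|L'|-1$.

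\textbf{Decomposition.} Suppose instead that $C\setminus S'$ has at least two components. Walk around $C$ and split it into maximal segments inside $L'$ (the $L'$-segments) and maximal segments inside $S'$ (the $S'$-segments), which alternate. Each $L'$-segment $Q$ has endpoints attached to vertices $a,b\in S'$ along $C$, so together with its two attaching edges it forms an $a$--$b$ path $Q^{+}$ whose interior lies in $L'$. Because $\dot G[S']$ is all positive after switching (Lemma \ref{S_4}(iii)), the $S'$-segments are positive. Hence $\sigma(C^{-})$ equals the product of the signs $\sigma(Q^{+})$ over the $L'$-segments, and some $L'$-segment $Q_0$ has $\sigma(Q_0^{+})=-1$.

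\textbf{Reconnection.} Take $Q_0^{+}$ from $a$ to $b$ and close it with a positive path through $S'$ to obtain a negative cycle $C'$. To do this, use Lemma \ref{G_5}(i) (as adapted to $S'$ in Lemma \ref{S_4}): any two vertices of $S'$ have $\Theta(n)$ common neighbours in $S'$, and $\dot G[S']$ is all positive and nearly complete $r$-partite. So we may choose an $a$--$b$ path in $S'$ of any prescribed length, within the right parity class, that avoids the $O(1)$ vertices of $Q_0$, and possibly of a second parity-correcting segment. Such a path is automatically positive, and the closed cycle $C'$ is then negative. The cycle $C'$ meets $S'$ and omits every vertex of $L'$ lying on the other $L'$-segments of $C$, which are nonempty by assumption. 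This contradicts Lemma \ref{S_4}(i) applied to $C'$. For this contradiction the length of $C'$ is irrelevant, because Lemma \ref{S_4}(i) holds for every negative cycle.

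\textbf{Main obstacle.} The hard part is guaranteeing the connecting $S'$-path exists with $a\neq b$ handled correctly. If $a=b$, then $Q_0^{+}$ is already a negative cycle, and we are done directly since it misses the other segments. When $a$ and $b$ lie in the same part $S_i'$ or in different parts, the high common-degree property lets us build paths of length $2$ or $1$ (or $3$) through $S'$ avoiding a bounded set. The point to verify carefully is that $|L'|=O(1)$, so the bounded set to avoid really is bounded. This follows because $L'\subseteq V(C^{-})$ holds for one fixed short negative cycle, for instance the one guaranteed by unbalancedness combined with Lemma \ref{S_4}(i). This bound on $|L'|$ is where I expect the subtlety to lie.
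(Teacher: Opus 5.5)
Your argument is correct and is essentially the paper's proof: a negative $L'$-segment $Q_0^{+}$ is a negative path with both ends in $S'$ and interior in $L'$, and closing it through $S'$ via Lemma~\ref{G_5}(i) gives a negative cycle that misses a vertex of $L'$, contradicting Lemma~\ref{S_4}(i). The worry in your last paragraph is unnecessary, and your justification of $|L'|=O(1)$ does not hold, since unbalancedness alone does not supply a short negative cycle; but no such bound is needed, because the closing path lies in $S'$, which is disjoint from $L'$, so for $a\neq b$ a single common neighbour $w\in S'$ of $a$ and $b$ already yields the negative cycle $Q_0^{+}+bw+wa$ with no parity or avoidance considerations.
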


    \begin{proof}
        Suppose to the contrary that there exists a negative cycle $(C, \sigma)$ such that $C \setminus S'$ is the disjoint union of at least two paths. Note that $\dot{G}[S']$ is all positive and $(C,\sigma)$ is a negative cycle. Thus, there exist a negative path $\dot{P}$ with both endpoints in $S'$ in $\dot{G}[L'\cup(\cup_{i=1}^{p}N(v_{i})\cap S')]$, such that $L'\setminus V(\dot{P})\neq\emptyset$. According to Lemma \ref{G_5}(i), there exists a negative cycle $C^{-}{'}$ in $\dot{G}[S'\cup V(\dot{P})]$. It follows that $L'\setminus V(C^{-}{'})=L'\setminus V(\dot{P})\neq\emptyset$, which contradicts Lemma \ref{S_4}(i).
    \end{proof}
    
    Choose the shortest one among these negative cycles, and without loss of generality, denote it by $C^{-} = (C, \sigma)$. Applying Claim \ref{main_1}, $C \setminus S'$ is a path of length $|L'|-1$, and we may assumet that its vertices are ordered as $v_1 v_2 \cdots v_p$. We consider the following two cases. 
    
    {\flushleft {\it Subcase A.1.} $|V(C^{-}) \cap S'| \ge 2$.}

    Let $N(v_1) \cap C^{-} \cap S' = \{ v_{0} \}$ and $N(v_p) \cap C^{-} \cap S' = \{ v_{p+1} \}$. Up to switching equivalence, we may assume that $\sigma(v_i v_{i+1}) = +1$ for each $i \in [p]$ and $\sigma(v_0 v_{1}) = -1$. Thus, it follows that every edge contained in \(E(v_1,S')\) is a negative edge, and every edge contained in \(E(v_p,S')\) is a positive edge; otherwise, by Lemma \ref{G_5}(i), we can find a negative cycle of length $2k+1$. For each $i\in[2,p-1]$, by Lemma \ref{G_5}(i) and \ref{S_4}(i), we get $e(v_i,S')=0$. On the other hand, by Lemma \ref{S_4}(i), $E(\dot{G}[L']) \setminus E(C^{-})$ contains no positive edges and at most one negative edge, namely the edge $v_{1}v_{p}$; otherwise, we can find a negative cycle $C^{-}{'}$ such that $L' \setminus V(C^{-}{'}) \neq \emptyset$. Thus, $e(\dot{G}[L']) \le p$. Note that $N(v_1) \cap N(v_p) \cap S = \emptyset$. Recalling the choice of $C^{-}$, if $p \ge 4$, then
    \begin{equation}
        \begin{aligned}
        e(\dot{G})
        &= e(\dot{G}[L']) + e(\dot{G}[L',S']) + e(\dot{G}[S'])\\ 
        &\le p-1 + n-p + e(T_{r}(n-p))\\
        &\le n-1 + e(T_{r}(n-4))\\
        &\le \frac{r-1}{2r}(n^2 - \frac{6r-8}{r-1}n+9)\\
        &< e(T_{r}(n-2))+3,
        \end{aligned}
        \nonumber
    \end{equation}
    a contradiction. If $p \le 2$, then by Lemma \ref{G_5}(i), we can find a negative cycle of length $2k+1$, which is a contradiction. Now assume that $p=3$. If $k \ge 3$, then by Lemma \ref{G_5}(i), $\dot{G}$ contains a negative cycle of length $2k+1$, which is a contradiction. We assume that $k = 2$. If $N(v_1) \cap S' \setminus \{ v_0 \} \neq \emptyset$, we may assume $x \in N(v_1) \cap S' \setminus \{ v_0 \}$, and then $v_{p+1}$ is not adjacent to $x$; otherwise, $\dot{G}$ contains a negative cycle of length $2k+1$. Similarly, if $N(v_{p}) \cap S' \setminus \{ v_{p+1} \} \neq \emptyset$, we may assume $y \in N(v_{p}) \cap S' \setminus \{ v_{p+1} \}$, and then $v_{0}$ is not adjacent to $y$. Thus, 
    \begin{equation}
        \begin{aligned}
        e(\dot{G})
        &= e(\dot{G}[L']) + \left(e(\dot{G}[L',S']) + e(\dot{G}[S'])\right)\\ 
        &\le 2 + \left(2 + e(T_{r}(n-3)) - 1\right)\\
        &= e(T_{r}(n-3)) + 3\\
        &< e(T_{r}(n-2))+3,
        \end{aligned}
        \nonumber
    \end{equation}
    a contradiction. 
    
    {\flushleft {\it Subcase A.2.} $|V(C^{-}) \cap S'| = 1$.}

    Let $V(C^{-}) \cap S' = \{ v_0 \}$. Up to switching equivalence, we may assume that $\sigma(v_i v_{i+1}) = +1$ for each $i \in [p-1]$ and $\sigma(v_0 v_{1}) = -1$. Thus, every edge contained in \(E(v_1,S')\) is a negative edge and every edge contained in \(E(v_p,S')\) is a positive edge; otherwise, by Lemma \ref{G_5}(i), there exists a negative cycle of length $2k+1$ in $\dot{G}$. Similarly to Subcase $A.1$, for any $i \in [2,p-1]$, $e(v_i,S')=0$ and $e(\dot{G}[L']) \le p$. If $p \ge 2k+1$, then 
    \begin{equation}
        \begin{aligned}
        e(\dot{G})
        &= e(\dot{G}[L']) + e(\dot{G}[L',S']) + e(\dot{G}[S'])\\ 
        &\le p+ 2(1-\frac{1}{r}-7\varepsilon)n + e(T_{r}(n-p))\\
        & = \left(e(T_{r}(n-p))+p\right) + 2(1-\frac{1}{r}-7\varepsilon)n \\
        &\le e(T_{r}(n-5)) + 5  + 2(1-\frac{1}{r}-7\varepsilon)n \\
        &\le \frac{r-1}{2r}(n^2 - 6n+25)-14\varepsilon n +5\\
        &< e(T_{r}(n-2))+3,
        \end{aligned}
        \nonumber
    \end{equation}
    a contradiction. If $3 \le p \le 2k-2$, then $N(v_1) \cap S' \setminus \{ v_0 \} = \emptyset$ and $N(v_{p}) \cap S' \setminus \{ v_0 \} = \emptyset$; otherwise, by Lemma \ref{G_5}(i), $\dot{G}$ contains a negative cycle of length $2k+1$. Hence, we have 
    \begin{equation}
        \begin{aligned}
        e(\dot{G})
        &= e(\dot{G}[L']) + e(\dot{G}[L',S']) + e(\dot{G}[S'])\\ 
        &\le p+ 2 + e(T_{r}(n-p))\\
        &\le e(T_{r}(n-3))+5\\
        &< e(T_{r}(n-2))+3,
        \end{aligned}
        \nonumber
    \end{equation}
    a contradiction. If $p=2$, then $N(v_1)\cap S'\setminus\{v_0\}=\emptyset$ and $N(v_p)\cap S'\setminus\{v_0\}=\emptyset$; otherwise, by Lemma $\ref{G_5}$(i), there exists a negative cycle of length $2k+1$ in $\dot{G}$. Furthermore, since $p=2$, we have $e(\dot{G}[L']) = 1$. Thus, we get 
    \begin{equation}
        \begin{aligned}
        e(\dot{G})
        &= e(\dot{G}[L']) + e(\dot{G}[L',S']) + e(\dot{G}[S'])\\ 
        &\le 1 + 2 + e(T_{r}(n-2))\\
        &= e(T_{r}(n-2))+3,
        \end{aligned}
        \nonumber
    \end{equation}
    with equality if and only if $\dot{G}$ switching isomorphic to $C_{3}^{-} \cdot T_{r}(n-2)$. Since $p \neq 2k$, we assume below that $p=2k-1$. We assert that for any two distinct integers \(i,j\) with \(i,j \in [r]\), if \(u \in N(v_1) \cap S'_i\) and \(v \in N(v_p) \cap S'_j\), then \(u\) and \(v\) are non-adjacent; otherwise, this contradicts the fact that \(\dot{G}\) is \(C_{2k+1}^{-}\)-free. Without loss of generality, assume that \(v_0 \in S’_{1}\). If $e(v_{p},S’_{1})= 1$, for sufficiently large $n$, then we have 
    \begin{equation}
        \begin{aligned}
        e(\dot{G})
        &= e(\dot{G}[L']) + e(\dot{G}[L',S']) + e(\dot{G}[S'])\\ 
        &\le p + \left(e(u_1,S') + e(u_p,S') +e(\dot{G}[S']) \right)\\
        &\le 3 + \left( e(T_{r}(n-3)) + (1-\frac{1}{r}-7\varepsilon)n+1\right)\\
        &\le 4 + \left( \frac{r-1}{2r}(n^2-6n+9) + (1-\frac{1}{r}-7\varepsilon)n\right)\\
        &=  \frac{r-1}{2r}(n^2-4n+9) - 7\varepsilon n+4\\
        &< e(T_{r}(n-2))+3,
        \end{aligned}
        \nonumber
    \end{equation}
    a contradiction. If $e(v_{p},S’_{1}) > 1$, we have 

    \begin{equation}\label{eq::lg}
        \begin{aligned}
        e(\dot{G})
        &= e(\dot{G}[L']) + e(\dot{G}[L',S']) + e(\dot{G}[S'])\\ 
        &\le p+ \left(e(v_1,S') + e(v_p,S') +e(\dot{G}[S']) \right)\\
        &\le p + \left( 2|S'_{1}| + e(K_{|S'_{1}|,|S'_{2}|,\cdots,|S'_{r}|})\right)\\
        &= p + \left( 2|S'_{1}| + \frac{1}{2}( (n-p)^2 - \sum_{i=1}^r |S'_i|^2 )\right)\\
        &\le 3 +  \frac{1}{2}(n-3)^2+ 2|S'_{1}| - \frac{1}{2}\sum_{i=1}^r |S'_i|^2.
        \end{aligned}
    \end{equation}
    If $r\ge 4$, by the Lagrange multiplier method and straightforward calculations, we have
    \begin{equation}
        \begin{aligned}
        3 +  \frac{1}{2}(n-3)^2+ 2|S'_{1}|-\frac{1}{2} \sum_{i=1}^r |S'_i|^2
        &\le \frac{r(n-3)^2-(n-5)^2}{2r}+5\\
        &= \frac{r-1}{2r}(n-2)^2+\frac{6n-2rn + 9r - 21}{2r}+3\\
        &< e(T_r(n-2))+3, 
        \end{aligned}
        \nonumber
    \end{equation}
which is a contradiction. If $r=3$, applying the method of Lagrange multipliers analogously, we have 
    \begin{equation}
   3+\frac{1}{2}(n-3)^2+ 2|S'_{1}|-\frac{1}{2} \sum_{i=1}^3 |S'_i|^2 \le \left\lfloor \frac{n^2 - 4n + 16}{3} \right\rfloor.
        \nonumber
    \end{equation}
Moreover, the equality holds precisely in the following cases: if $n = 3k$, then
$(|S_1'|,|S_2'|,|S_3'|) = (k + 1,k - 2,k - 2)$ or $(k,k - 2,k - 1)$; if $n = 3k + 1$, then $(|S_1'|,|S_2'|,|S_3'|) = (k,k - 1,k - 1)$ or $(k + 1,k - 2,k - 1)$; if $n = 3k + 2$, then
$(|S_1'|,|S_2'|,|S_3'|) = (k + 1,k - 1,k - 1)$. Therefore, by inequality \eqref{eq::lg}, $e(\dot{G}) \le e(\dot{H})$, with equality if and only if $\dot{G}$ is switching isomorphic to $\dot{H}$. 

    {\flushleft {\it Case B.} For any negative cycle $C^{-}$, $V(C^{-}) \cap S' = \emptyset$.}

    Choose the shortest one among these negative cycles, and without loss of generality, denote it by $C^{-}{'} = (C', \sigma)$. Since $V(C^{-}{'}) \cap S' = \emptyset$, we may order the vertices of $C'$ as $v_1 v_2 \cdots v_p v_1$. Thus, there exists some \(i \in [p]\) such that \(e(v_i,S') \neq 0\). Without loss of generality, assume there exists a vertex \(v_0 \in S'\) adjacent to \(v_1\). Up to switching equivalence, we may assume that $\sigma(v_i v_{i+1}) = +1$ for each integer $i$ with $0 \le i \le p-1$ and $\sigma(v_{p} v_{1}) = -1$. Since $V(C^{-}) \cap S' = \emptyset$ for any negative cycle $C^{-}$, there is no positive edge in $E(L'\setminus \{v_1 \}, S')$. Furthermore, for each $i\in[2,p]$, we deduce from Lemma \ref{G_5}(i) and \ref{S_4}(i) that $e(v_i,S')=0$. Thus, we have 
    \begin{equation}
        \begin{aligned}
        e(\dot{G})
        &= e(\dot{G}[L']) + e(\dot{G}[L',S']) + e(\dot{G}[S'])\\ 
        &\le p + (1-\frac{1}{r}-7\varepsilon)n + e(T_{r}(n-p))\\
        &\le 3 + (1-\frac{1}{r}-7\varepsilon)n + e(T_{r}(n-3))\\
        &\le \frac{r-1}{2r}(n^2-6n+9)+ (1-\frac{1}{r}-7\varepsilon)n + 3\\
        &= \frac{r-1}{2r}(n^2-4n+9) -7\varepsilon n + 3\\
        &< e(T_{r}(n-2))+3,
        \end{aligned}
        \nonumber
    \end{equation}
    a contradiction. 

    This completes the proof. \hfill$\square$

    \noindent\textbf{The proof of Corollary \ref{cor::2}.} Let $\dot{G}$ be a signed graph satisfying the conditions of Corollary \ref{cor::2} with the maximum number of edges. Choose an integer $k\ge 3$ such that $2k+1\ge \ell$. Since $\dot{G}$ is $\mathcal{\dot{C}}_{\ge \ell}^{-}$-free, it follows that, for sufficiently large $n$, $\dot{G}$ contains no negative cycle of length $2k+1$. Note that $C_{3}^{-} \cdot T_{r}(n-2)$ is $\mathcal{\dot{C}}_{\ge \ell}^{-}$-free. By Theorem \ref{cor::1}, we have $e(\dot{G}) \le e(C_{3}^{-} \cdot T_{r}(n-2))$, with equality if and only if $\dot{G}$ is switching isomorphic to $C_{3}^{-} \cdot T_{r}(n-2)$.

\end{document}